\newtheorem{theorem}{Theorem}[section]
\newtheorem {proposition}[theorem]{Proposition}
\newtheorem {corollary}[theorem]{Corollary}
\newtheorem {definition}[theorem]{Definition}
\newtheorem {example}[theorem]{Example}
\title{Frequent, disjoint hypercyclicity and strong topological transitivity of generalized weighted shift operators on Hilbert $C^{\ast}$-modules}
\author{Song-Ung Ri, Hyon-Hui Ju, Jin-Myong Kim*}
\date{}
\begin{document}
\maketitle
{\makeatletter\renewcommand*\@makefnmark{}\footnotetext{

*Corresponding author E-mail address:
jm.kim0211@ryongnamsan.edu.kp(Jin-Myong Kim) }\makeatother}

\centerline{Faculty of Mathematics, {\bf Kim Il Sung} University,}

\centerline{Pyongyang, Democratic People's Republic of Korea}
%\selectlanguage{english} %%% remove comment delimiter ('%') and select language if required

\begin{abstract}
In this paper we study some dynamical properties such as Frequent Hypercyclicity Criterion, chaos, disjoint hypercyclicity and $\mathcal{F}$-transitivity via Furstenberg family $\mathcal{F}$ for generalized bilateral weighted shift operator on the standard Hilbert $C^{\ast}$-module over $C^{\ast}$-algebra of compact operators on a separable Hilbert space.

\end{abstract}

\vskip0.6cm\noindent
{\bf Keywords} standard Hilbert modules, generalized bilateral weighted shifts, hypercyclic sequences of operators, disjoint hypercyclicity, 
 Frequent Hypercyclicity Criterion, Furstenberg family

\vskip0.6cm\noindent
{\bf Mathematics Subject Classification(2020)}  47A16, 37B20, 47B37

\section{Introduction}

Weighted shifts have been studied intensively as they serve as a good source of examples and counterexamples in the study of bounded
operators. The first example of a hypercyclic operator on a Banach space was given in 1969 by Rolewicz (\cite{R69}) for weighted shifts and every new notion in linear dynamics is first tested on weighted shifts.  Accordingly, most of the dynamical properties in linear dynamics (like hypercyclcity, topological mixing, chaos and etc.) were completely characterized for weighted shifts on  $\ell^{p}(\mathbb{N})$ and $\ell^{p}(\mathbb{Z})$ $(1\leq p< \infty)$(\cite{BMPP19}, \cite{CS04}, \cite{G00}, \cite{S95}).

This paper is concerned with the generalized bilateral weighted shift operator introduced by Ivkovi\'{c} (\cite{I24}) as follows: Let $\mathcal{H}$ be a separable Hilbert space and $B(\mathcal{H})$ be the $C^{\ast}$-algebra of all bounded linear operators on $\mathcal{H}$. And let $\mathcal{A}:=B_{0}(\mathcal{H})$ be the $C^{\ast}$-algebra of all compact operators on $\mathcal{H}$. Assume that $W:=(W_{j})_{j\in\mathbb{Z}}$ is a uniformly bounded sequence of invertible operators in $B(\mathcal{H})$ such that the sequence $\{W^{-1}_{j}\}_{j\in\mathbb{Z}}$ is also uniformly bounded in $B(\mathcal{H})$ and $U$ is a unitary operator on $\mathcal{H}$. Generalized bilateral weighted shift operator $T_{U, W}$ on $\ell_{2}(\mathcal{A})$, the standard right Hilbert module over $\mathcal{A}$, is defined by
\[(T_{U,W}(x))_{n}=W_{n}x_{n-1}U\]
for all $n\in\mathbb{Z}$ and $x:=(x_{j})_{j\in\mathbb{Z}}\in\ell_{2}(\mathcal{A})$.

In \cite{I24} was characterized hypercyclicity and obtained a sufficient condition for chaos for  the generalized bilateral weighted shift operator  $T_{U, W}$.

In this paper we study some dynamical properties of generalized bilateral weighted shift operator $T_{U, W}$  considered in \cite{I24}, such as Frequent Hypercyclicity Criterion, chaos, disjoint hypercyclicity and $\mathcal{F}$-transitivity via furstenberg family $\mathcal{F}$.

According to previous results, for the weighted shifts on $\ell^{p}(\mathbb{N})$ and $\ell^{p}(\mathbb{Z})$ $(1\leq p< \infty)$, hypercyclicity and weak mixing are equivalent (\cite{GM11}) and reiterative hypercyclcity, upper frequent hypercyclicity, frequent hypercyclicity and Devaney chaos are also equivalent (\cite{BR15}, \cite{BMPP16}).
And an invertible bilateral weighted shift on $\ell^{p}(\mathbb{Z})$ $(1\leq p< \infty)$ is chaotic if and only if it satisfies Frequent Hypercyclicity Criterion (\cite{BR15}).  Also any tuple of bilateral weighted shifts on $\ell^{p}(\mathbb{Z})$ $(1\leq p< \infty)$ fails to be disjoint weakly mixing and no tuple of bilateral weighted shifts on $\ell^{p}(\mathbb{Z})$ $(1\leq p< \infty)$ containing an invertible shift can be disjoint hypercyclic (\cite{BMS14}) while there exists a tuple of disjoint hypercyclic invertible pseudo-shifts on $\ell^{p}(\mathbb{Z})$ $(1\leq p< \infty)$ (\cite{R24}, \cite{BP07}). As for the hypercyclicity of the operators on
 $C^{\ast}$-algebra, in \cite{IT23} was characterized hypercyclic weighted composition operators on the
commutative $C^{\ast}$-algebra of continuous functions vanishing at infnity on a locally compact, non-compact
Hausdorff space and more references therein.

As the operator $T_{U, W}$ on $\ell_{2}(\mathcal{A})$ is a generalization of the bilateral weighted shift on $\ell^{p}(\mathbb{Z})$, one might wonder if the above mentioned properties are still satisfied for $T_{U, W}$.

On the other hand the study on recurrence properties for linear dynamical systems including hypercyclicity and transitivity have been more refined via furstenberg family $\mathcal{F}$ (see $\cite{AK21}$, $\cite{BG18}$, $\cite{BMPP16}$, $\cite{BMPP19}$ and $\cite{HH18}$).
In $\cite{BMPP16}$ was introduced $\mathcal{F}$-transitivity notion and characterized the $\mathcal{F}$-transitivity of weighted shifts on $\ell^{p}(\mathbb{Z})$.

The paper is organized as follows. In subsection 3.1 we give a sufficient condition for the operator $T_{U, W}$ to satisfy the Frequent Hypercyclicity Criterion, which improves the result in \cite{I24}. In particular we show that in some special cases, the operator  $T_{U, W}$ is chaotic if and only if it satisfies Frequent Hypercyclicity Criterion, which is similar to the case of weighted shifts on $\ell^{p}(\mathbb{Z})$.
In subsection 3.2  we show that as for the operator $T_{U, W}$, generalized weighted shifts on $\ell_{2}(\mathcal{A})$, we can get a tuple of disjoint hypercyclic weighted shifts  composed all invertible ones unlike the case of  weighted shifts on $\ell^{p}(\mathbb{Z})$. First  we give a characterization of disjoint hypercyclicity for a tuple of  the operators $T_{U,W}$ on $\ell_{2}(\mathcal{A})$ 
 and  illustrate it with a concrete example.
In subsection 3.3 we characterize $\mathcal{F}$-transitivity for $T_{U, W}$ and give an example of mixing operator $T_{U, W}$.

\section{Preliminaries}
Let $X$ denote a separable Banach space and $\mathcal{L}(X)$ be the algebra of continuous linear operators on $X$. From
now on, all operators on  $X$  will be in $\mathcal{L}(X)$, if nothing else is said. Denote $\mathbb{N}_{0}:=\mathbb{N}\cup \{0\}$.

A sequence $(T_{n})_{n\in\mathbb{N}_{0}}$ of operators on $X$ is called \textit{topologically transitive} (respectively, \textit{topologically mixing}) if for every pair of non-empty open sets $U, V\subset X$, the return set $N(U,V):=\{n\in\mathbb{N}_{0}: T_{n}(U)\cap V\neq \emptyset\}$ is infinite (respectively, cofinite). An operator $T$ on $X$ is called \textit{topologically transitive} (respectively, \textit{topologically mixing}) if the sequence $(T^{n})_{n\in\mathbb{N}_{0}}$ is topologically transitive (respectively, topologically mixing).
A sequence $(T_{n})_{n\in\mathbb{N}_{0}}$ of operators on $X$ is \textit{hypercyclic} if there exists a vector $x \in X$, called a \textit{hypercyclic vector}, such that its orbit $ Orb(x, (T_{n})) =\{T_{n}x:n\in {\mathbb{N}_{0}} \}$ is dense in $X$. This is equivalent to the fact that there is a vector $x \in X$ such that for any non-empty open set $U \subset X$ the return set $N(x,U) := \{n \in {\mathbb{N}_{0}}:T_{n}x \in U \}$ is non-empty. An operator $T$ on $X$ is called \textit{hypercyclic} if the sequence $(T^{n})_{n\in\mathbb{N}_{0}}$ is hypercyclic. It is well known that a sequence $(T_{n})_{n\in\mathbb{N}_{0}}$ of operators on a separable Banach space $X$ is hypercyclic if and only if it is topologically transitive. A hypercyclic operator $T$ on $X$ with a dense set of periodic points  is called \textit{chaotic}.

\subsection{Frequent hypercyclicity and disjoint hypercyclcity}

An operator $T$ on $X$ is said to be \textit{frequently hypercyclic} if there exists
a vector $x\in X$, called a \textit{frequently hypercyclic vector}, such that for every non-empty open subset $U \subset X$, the return set $N(x,U)$ has positive lower density, that is,

 \[\underline{d} (N(x,U))=\liminf_{n \rightarrow \infty}\frac{\# (N(x,U)\bigcap [0,n])}{n+1} >0, \]
 where $\# (\bullet)$ denotes the cardinality of the set $\bullet$.

Frequent Hypercyclicity Criterion is well known as a sufficient condition for frequent hypercyclicity.

\begin{theorem} \textnormal{(Frequent Hypercyclicity Criterion \cite{GM11})} Let $T$ be an operator on a separable Banach space $X$. If there is a dense subset $X_{0}$ of $X$ and a map $S:X_{0}\rightarrow X_{0}$ such that, for any $x\in X_{0}$,

$~~(i)~\sum^{\infty}_{n=0}T^{n}$ converges unconditionally,
	
$~(ii)~\sum^{\infty}_{n=0}S^{n}$ converges unconditionally,
	
$(iii)~TSx=x,$\\
then $T$ is frequently hypercyclic.
\end{theorem}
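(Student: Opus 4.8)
The plan is to prove the criterion by explicitly constructing a frequently hypercyclic vector, following the classical Bayart--Grivaux method. Since $X$ is separable and $X_{0}$ is dense, $X_{0}$ admits a countable dense sequence $(y_{k})_{k\geq 1}$, and this sequence is dense in $X$. I would record at the outset that, because $S$ maps $X_{0}$ into itself, iterating hypothesis (iii) gives $T^{n}S^{n}y=y$ for every $y\in X_{0}$ and every $n$. The engine of the construction is a purely combinatorial fact that I would quote as a known lemma rather than reprove: for any prescribed increasing sequence of thresholds $(p_{k})$ there exist pairwise disjoint subsets $(A_{k})_{k\geq 1}$ of $\mathbb{N}_{0}$, each of positive lower density, with $\min A_{k}\geq p_{k}$ and such that distinct elements $n\in A_{k}$, $m\in A_{l}$ satisfy $|n-m|\geq p_{k}+p_{l}$.

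The thresholds are dictated by the unconditional convergence hypotheses. Using (i) and (ii) I would choose the increasing sequence $(p_{k})$ so that, for every $k'\leq k$ and every finite set $F\subseteq[p_{k},\infty)$, both $\|\sum_{j\in F}T^{j}y_{k'}\|\leq 2^{-k}$ and $\|\sum_{j\in F}S^{j}y_{k'}\|\leq 2^{-k}$; this is possible because the tails of the unconditionally convergent series can be made uniformly small, and only the finitely many indices $k'\leq k$ are involved at stage $k$. With the sets $(A_{k})$ supplied for these thresholds, I would define the candidate vector
\[
x=\sum_{k\geq 1}\sum_{n\in A_{k}}S^{n}y_{k}.
\]
Convergence is then immediate: since $A_{k}\subseteq[p_{k},\infty)$, each inner block has norm at most $2^{-k}$ (a sub-series of an unconditionally convergent series, bounded via the threshold estimate), so the series converges, summed absolutely block by block.

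The heart of the argument is the estimate of $\|T^{m}x-y_{k_{0}}\|$ for a fixed $k_{0}$ and $m\in A_{k_{0}}$. Expanding $T^{m}x=\sum_{k}\sum_{n\in A_{k}}T^{m}S^{n}y_{k}$ and using $T^{m}S^{m}=\mathrm{id}$ on $X_{0}$, the single diagonal term ($k=k_{0}$, $n=m$) equals $y_{k_{0}}$; every other term has $n\neq m$ and acts as $S^{\,n-m}y_{k}$ when $n>m$ and as $T^{\,m-n}y_{k}$ when $n<m$. The separation property forces $|n-m|\geq p_{k}+p_{k_{0}}$, so for the blocks with $k\geq k_{0}$ the tail estimate at threshold $p_{k}$ applies, while for the blocks with $k<k_{0}$ the exponents exceed $p_{k_{0}}$ and the tail estimate at threshold $p_{k_{0}}$ applies. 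Summing these block bounds yields $\|T^{m}x-y_{k_{0}}\|\leq\eta_{k_{0}}$ for all $m\in A_{k_{0}}$, where $\eta_{k_{0}}$ is of order $k_{0}\,2^{-k_{0}}$ and in particular $\eta_{k_{0}}\to 0$. Hence $N(x,B(y_{k_{0}},\eta_{k_{0}}))\supseteq A_{k_{0}}$ has positive lower density.

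Finally I would upgrade this to every non-empty open set $U$. Given $U$ and a point $z\in U$, a ball $B(z,r)\subseteq U$ contains infinitely many terms of the dense sequence $(y_{k})$, so I may select $k_{0}$ as large as I wish with $\|y_{k_{0}}-z\|<r/2$; taking it large enough that $\eta_{k_{0}}<r/2$ gives $B(y_{k_{0}},\eta_{k_{0}})\subseteq B(z,r)\subseteq U$, whence $\underline{d}(N(x,U))\geq\underline{d}(N(x,B(y_{k_{0}},\eta_{k_{0}})))>0$. This exhibits $x$ as a frequently hypercyclic vector. The step I expect to be the main obstacle is the uniform error control of the third paragraph: the argument hinges on coupling the combinatorial separation $p_{k}+p_{l}$ to the unconditional-convergence thresholds so that the persistent contributions of the low-index blocks are pushed out to exponents beyond $p_{k_{0}}$ and thereby made small. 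This interplay, rather than any single estimate, is the crux, and it is exactly where unconditional (as opposed to merely norm) convergence is indispensable.
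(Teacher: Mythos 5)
The paper does not prove this statement: it is quoted verbatim as a known result with the citation \cite{GM11}, so there is no in-paper argument to compare against. Your proposal is the standard Bayart--Grivaux construction as presented in that reference (countable dense sequence in $X_{0}$, the combinatorial lemma producing disjoint positive-lower-density sets with the separation $|n-m|\geq p_{k}+p_{l}$, the vector $x=\sum_{k}\sum_{n\in A_{k}}S^{n}y_{k}$, and the block-by-block tail estimates), and it is correct as written.
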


\begin{proposition} \textnormal{(Proposition 9.11, \cite{GM11})}
An operator on a separable Banach space that satisfies the Frequent Hypercyclicity Criterion is also chaotic and mixing.
\end{proposition}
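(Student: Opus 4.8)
The plan is to establish the two conclusions—chaos and mixing—separately, both by exploiting the algebraic relation $TSx=x$ together with the convergence of the series in conditions (i) and (ii), working throughout with the dense subset $X_0$ and the map $S:X_0\to X_0$ furnished by the Frequent Hypercyclicity Criterion. The first thing I would record is that $TSx=x$ on $X_0$ upgrades by induction to $T^{n}S^{n}x=x$ for every $n\ge 0$ and every $x\in X_0$: since $S(X_0)\subseteq X_0$, one has $T^{n}S^{n}x=T^{n-1}(TS)(S^{n-1}x)=T^{n-1}S^{n-1}x$, and the claim follows. Hypercyclicity then comes for free, because by Theorem 2.1 the operator $T$ is frequently hypercyclic, hence hypercyclic; so for chaos it remains only to produce a dense set of periodic points.

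For the periodic points, I would fix $x\in X_0$ and a prospective period $N\in\mathbb{N}$ and set
\[
y_N=\sum_{j=0}^{\infty}T^{jN}x+\sum_{j=1}^{\infty}S^{jN}x .
\]
Both series converge, being subseries of the unconditionally convergent series $\sum_{n}T^{n}x$ and $\sum_{n}S^{n}x$. Applying $T^{N}$ term by term and using $T^{N}S^{jN}x=S^{(j-1)N}x$ (which follows from $T^{N}S^{N}=\mathrm{id}$ on $X_0$ and $S^{(j-1)N}x\in X_0$), a short reindexing yields $T^{N}y_N=y_N$, so $y_N$ is periodic. To get density I would let $N\to\infty$: by unconditional convergence, for any $\varepsilon>0$ there is a finite set $F\subset\mathbb{N}_0$ such that any finite sum of terms of either series indexed outside $F$ has norm below $\varepsilon$; once $N>\max F$ the index sets $\{jN:j\ge 1\}$ avoid $F$, whence $\|y_N-x\|<2\varepsilon$. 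Thus every $x\in X_0$ is a limit of periodic points, and since $X_0$ is dense the periodic points are dense in $X$; combined with hypercyclicity this gives chaos.

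For mixing I would show the return set $N(U,V)$ is cofinite for every pair of nonempty open sets $U,V$. Choosing $x\in U\cap X_0$, $y\in V\cap X_0$, and $\varepsilon>0$ with $B(x,\varepsilon)\subseteq U$ and $B(y,\varepsilon)\subseteq V$, I would test the points $z_n=x+S^{n}y$. Then $T^{n}z_n=T^{n}x+T^{n}S^{n}y=T^{n}x+y$, so that $\|z_n-x\|=\|S^{n}y\|$ and $\|T^{n}z_n-y\|=\|T^{n}x\|$. Since the series $\sum_{n}S^{n}y$ and $\sum_{n}T^{n}x$ converge, their terms tend to $0$, so both quantities fall below $\varepsilon$ for all $n$ beyond some $N_0$; hence $z_n\in U$ and $T^{n}z_n\in V$ whenever $n\ge N_0$, which shows that $N(U,V)$ contains all sufficiently large $n$.

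The main obstacle, and the only place where \emph{unconditional} (rather than mere) convergence is genuinely needed, is the density of the periodic points: one must guarantee not merely that the subseries defining $y_N$ converge, but that their tail contribution shrinks uniformly as the period $N$ grows, which is exactly what the unconditional-convergence tail estimate supplies. The mixing argument, by contrast, uses only that convergent series have null terms, and is therefore comparatively routine.
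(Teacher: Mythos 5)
The paper does not prove this statement: it is quoted verbatim from the cited reference (Proposition 9.11 of \cite{GM11}) and used as a black box, so there is no internal proof to compare against. Your argument is correct and is essentially the standard one from that source: the periodic-point construction $y_N=\sum_{j\ge 0}T^{jN}x+\sum_{j\ge 1}S^{jN}x$ with the uniform tail estimate from unconditional convergence, and the mixing argument via $z_n=x+S^ny$ using only that terms of convergent series tend to $0$, are both sound, and you correctly isolate the one place where unconditionality (rather than mere convergence) is needed.
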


Disjointness in hypercyclicity was introduced independently by Bernal \cite{B07}, and B\`{e}s and Peris \cite{BP07} in 2007.
For $N\geq 2$, the operators $T_{1}$, ..., $T_{N}\in \mathcal{L}(X)$ are called \textit{disjoint hypercyclic} if the direct sum $T_{1}\oplus \cdots \oplus T_{N}$ has a hypercyclic vector of the form $(x, ..., x)\in X^{N}$. Such a vector $x\in X$ is called a \textit{disjoint hypercyclic} vector for $T_{1}, ..., T_{N}$. If the set of disjoint hypercyclic vectors is dense in $X$, then the operators are called \textit{densely disjoint hypercyclic}.

And for $N\geq 2$, operators $T_{1}$, ..., $T_{N}\in \mathcal{L}(X)$ are called \textit{disjoint topologically transitive} if for any non empty open subsets $U, V_{1}, V_{2}, ..., V_{N}$ of $X$, there exists $n\in \mathbb{N}$ such that $U\cap T^{-n}_{1}(V_{1})\cap...\cap T^{-n}_{N}(V_{N})\neq \emptyset$.
The operators $T_{1}$, ..., $T_{N}\in \mathcal{L}(X)$ are disjoint topologically transitive if and only if the set of disjoint hypercyclic vectors is dense in $X$ (Proposition 2.3, \cite{BP07}).

\subsection{Furstenberg families}

A collection $\mathcal{F}$ of subsets of $\mathbb{N}_{0}$ is called a \textit{Furtenberg family} if it is hereditary upward, that is, $A\in\mathcal{F}$ and $A\subset B$ imply $B\in \mathcal{F}$.
We say that a Furstenberg family is \textit{proper} if it is non-empty and does not contain the empty set.
And a Furstenberg family $\mathcal{F}$ is called \textit{finitely invariant} if for any $A\in\mathcal{F}$ and all $n\geq 0$, $A\setminus [0,n]\in\mathcal{F}$.
The family of all sets containing infinitely (respectively, cofinitely) many positive integers, is denoted by $\mathcal{F}_{\rm inf}$ (respectively, $\mathcal{F}_{\rm cof}$).

Some authors defined the notion of limit along
a Furstenberg family $\mathcal{F}$(see \cite{BMPP19}). Given a sequence $\{ x_{n} \} $ and $x$ in $X$ we say that $\{ x_{n} \} $  ${\mathcal F}$-\textit{converges} to $x$ (denoted by $x_{n} \stackrel{\mathcal F}{\longrightarrow} x$) if for every neighborhood $V$ of $x$, $\{ n\in {\mathbb N}_{0}: x_{n} \in V\} \in {\mathcal F}$. If ${\mathcal F}={\mathcal F}_{{\rm cof}} $ then ${\mathcal F}$-convergence coincides with the ordinary convergence.

Now we recall the notion of topological transitivity via Furstenberg families.
\begin{definition}\textnormal{\cite{BMPP19}
Let $X$ be a separable infinite-dimensional Banach space and $\mathcal{F}$ be a proper Furstenberg family.
A sequence $(T_{n})_{n\in \mathbb{N}_{0}}$ of operators on $X$ is called \textit{$\mathcal{F}$-transitive} if for any non-empty open sets $U,V\subset X$,  $N(U,V):=\{n\in\mathbb{N}_{0}: T_{n}U\cap V\neq \emptyset\}\in \mathcal{F}$. An operator $T$ on $X$ is called \textit{$\mathcal{F}$-transitive} if the sequence $(T^{n})_{n\in\mathbb{N}_{0}}$ is $\mathcal{F}$-transitive.
}\end{definition}

A topologically transitive operator is $\mathcal{F}_{\rm inf}-$transitive and a topologically mixing operator is  $\mathcal{F}_{\rm cof}-$transitive operator.

\section{The main results}
Let $\mathcal{H}$ be a separable Hilbert space and $B(\mathcal{H})$ be the $C^{\ast}$-algebra of all bounded linear operators on $\mathcal{H}$. And let $\mathcal{A}:=B_{0}(\mathcal{H})$ be the $C^{\ast}$-algebra of all compact operators on $\mathcal{H}$. Assume that $W:=\{W_{j}\}_{j\in\mathbb{Z}}$ is a uniformly bounded sequence of invertible operators in $B(\mathcal{H})$ such that the sequence $\{W^{-1}_{j}\}_{j\in\mathbb{Z}}$ is also uniformly bounded in $B(\mathcal{H})$ and $U$ is a unitary operator on $\mathcal{H}$. Generalized bilateral weighted shift operator $T_{U, W}$ on $\ell_{2}(\mathcal{A})$, the standard right Hilbert module over $\mathcal{A}$, is defined by
\[(T_{U,W}(x))_{n}=W_{n}x_{n-1}U\]
for all $n\in\mathbb{Z}$ and $x:=(x_{j})_{j\in\mathbb{Z}}\in\ell_{2}(\mathcal{A})$.

$T_{U,W}$ is invertible and its inverse $S_{U,W}$ is given by
\[(S_{U,W}(y))_{n}=W^{-1}_{n+1}y_{n+1}U^{\ast}\]
for all $n\in\mathbb{Z}$ and $y:=(y_{j})_{j\in\mathbb{Z}}\in\ell_{2}(\mathcal{A})$ (see \cite{I24}).
Then we can see that
\[(T^{n}_{U,W}(x))_{i}=W_{i}W_{i-1}...W_{i-n+1}x_{i-n}U^{n}\]
and
\[(S^{n}_{U,W}(y))_{i}=W^{-1}_{i+1}W^{-1}_{i+2}...W^{-1}_{i+n}y_{i+n}U^{\ast n}\]
for all $n\in\mathbb{N}$, $i\in\mathbb{Z}$ and $x:=(x_{i})_{i}$, $y:=(y_{i})_{i}\in \ell_{2}(\mathcal{A})$. For more details see \cite{I24} and the references therein.

For each $J, m \in\mathbb{N}$,  we denote $[J]:=\{-J, -J+1,..., J\}$ and $L_{m}:=span\{e_{-m}, e_{-m+1},..., e_{m}\}$  where $\{e_{i}\}_{i\in\mathbb{Z}}$ is an orthonormal basis for $\mathcal{H}$, $P_{m}$ is the orthogonal projection onto $L_{m}$,
\[F_{J,m}:=\{x=(x_{j})_{j}\in\ell_{2}(\mathcal{A}):x_{j}=P_{m}x_{j} \textrm{ for } j\in [J], x_{j}=0 \textrm{ for } j\notin[J]\} \]
and $F:=\bigcup_{J\in \mathbb{N}, m\in\mathbb{N}}F_{J,m}$. We note that $F$ is dense in $\ell_{2}(\mathcal{A})$ by Proposition $2.2.1$ in $\cite{MT05}$.

\subsection{Frequent Hypercyclicity Criterion and Chaos for $T_{U,W}$}

In this section our study is concerned with Frequent Hypercyclicity Criterion and chaos for generalized bilateral weighted shift $T_{U,W}$ on $\ell_{2}(\mathcal{A})$.

Following proposition shows a sufficient condition for $T_{U,W}$ to satisfy Frequent Hypercyclic Criterion.
In \cite{I24} was obtained a sufficient condition for $T_{U,W}$ to be chaotic.
 Since the Frequent Hypercyclicity Criterion guarantees chaos as well as frequent hypercyclicity and topologically mixing,  the condition (2) of Proposition \ref{pro4.1} is also a sufficient condition for chaos.

\begin{proposition}\label{pro4.1}
Let $\mathcal{H}$ be a separable Hilbert space, $\mathcal{A}$ be the $C^{\ast}$-algebra of of compact operators on $\mathcal{H}$ and $T_{U,W}$ be the weighted shift operator on $\ell_{2}(\mathcal{A})$, the standard right Hilbert module over $\mathcal{A}$. Then we have $(2)\Rightarrow (1)$.

$(1)$ $T_{U, W}$ satisfies Frequent Hypercyclicity Criterion.

$(2)$ For every $J,m\in\mathbb{N}$ there exist a strictly increasing sequence $\{n_{k}\}_{k}\subset \mathbb{N}$ and a sequence $\{D^{(k)}_{i}\}_{k}$ for $i\in [J]$ of operators in $B_{0}(\mathcal{H})$ such that for all $j\in[J]$
\[\lim_{k\rightarrow\infty}\|D^{(k)}_{j}-P_{m}\|=0\]
and

$(i)$ $\sum^{\infty}_{l=1}\|W_{j+ln_{k}}\ldots W_{j+1}D^{(k)}_{j}\|^{2}$ converges for all $j\in[J]$, $k\in \mathbb{N}$,

$(ii)$ $\sum^{\infty}_{l=1}\|W^{-1}_{j-ln_{k}+1}\ldots W^{-1}_{j}D^{(k)}_{j}\|^{2}$ converges for all $j\in[J]$, $k\in \mathbb{N}$.
\end{proposition}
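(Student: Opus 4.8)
The plan is to verify the three hypotheses of the Frequent Hypercyclicity Criterion (Theorem above) directly, using the dense subspace $F=\bigcup_{J,m}F_{J,m}$ as the candidate for $X_0$ and the inverse operator $S_{U,W}$ as the candidate for the map $S$. Since $F$ is dense in $\ell_2(\mathcal{A})$ and $S_{U,W}$ is the genuine inverse of $T_{U,W}$, condition (iii), namely $T_{U,W}S_{U,W}x=x$, holds automatically on all of $\ell_2(\mathcal{A})$, so the only real work is to establish the two unconditional-convergence conditions (i) and (ii). The subtlety is that $F$ is \emph{not} $S_{U,W}$-invariant (applying $S_{U,W}$ shifts and dilates the support, leaving $F_{J,m}$), so I cannot literally take $X_0=F$ and $S=S_{U,W}$ in the form stated; instead I will show that the series converge on each basis-type element and assemble a suitable dense invariant subspace from the data $\{D^{(k)}_i\}$ in hypothesis (2).

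\medskip

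\textbf{First} I would reduce everything to the generators. It suffices to establish (i) and (ii) for vectors of the form $x=(x_j)_j$ with a single nonzero entry $x_j=a$ (some $a\in\mathcal{A}$) sitting in slot $j\in[J]$, since $F_{J,m}$ is finitely spanned by such elements and unconditional convergence is preserved under finite sums. For such an $x$, using the explicit formulas
\[
(T^{n}_{U,W}(x))_{i}=W_{i}W_{i-1}\cdots W_{i-n+1}\,x_{i-n}\,U^{n},
\qquad
(S^{n}_{U,W}(y))_{i}=W^{-1}_{i+1}\cdots W^{-1}_{i+n}\,y_{i+n}\,U^{\ast n},
\]
together with the fact that $U$ is unitary (so $\|\cdot\|$ in $\ell_2(\mathcal{A})$ is unaffected by the trailing $U^{n}$ or $U^{\ast n}$), the norms $\|T^{n}_{U,W}x\|$ and $\|S^{n}_{U,W}x\|$ collapse to the operator-norm expressions appearing in (i) and (ii) of hypothesis (2). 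The point of the approximation $\|D^{(k)}_j-P_m\|\to 0$ is that it lets me replace the plain generators by the regularized pieces $D^{(k)}_j$ for which the two series are assumed to converge.

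\medskip

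\textbf{The heart of the argument}, and the step I expect to be the main obstacle, is passing from mere convergence of the two series in (i) and (ii) to \emph{unconditional} convergence of $\sum_n T^{n}_{U,W}x$ and $\sum_n S^{n}_{U,W}x$ in the module $\ell_2(\mathcal{A})$. Here the key structural feature is that the summands $T^{n}_{U,W}x$ for distinct $n$ are supported on \emph{disjoint} coordinate slots (the single entry of $x$ is shifted to position $j+n$ under $T^n$ and to $j-n$ under $S^n$), so they are mutually orthogonal in the Hilbert-module inner product. For an orthogonal series in $\ell_2(\mathcal{A})$, convergence of the scalar series of squared norms $\sum_n\|T^n_{U,W}x\|^2$ forces unconditional convergence of $\sum_n T^n_{U,W}x$ itself. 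This is exactly why conditions (i) and (ii) are phrased with the squares $\|\cdot\|^2$: each is precisely the squared-norm series of an orthogonal family, so the Hilbert-module analogue of the orthogonality criterion for unconditional convergence applies. I would verify this orthogonality claim carefully, since it is the mechanism that converts the bare numerical hypotheses into the unconditional convergence demanded by the criterion, and then conclude by Theorem (Frequent Hypercyclicity Criterion) that $T_{U,W}$ is frequently hypercyclic.
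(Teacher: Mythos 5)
Your proposal follows essentially the same route as the paper: the paper also builds its dense set $E$ from elements whose entries are regularized by the $D^{(k)}_j$ (namely $z_j=D^{(k)}_jx_j$ with $x\in F_{J,m}$), and its convergence argument is exactly your orthogonality mechanism in disguise --- since $n_k>2J$ the translates $[J]+l_sn_k$ are pairwise disjoint, so $\langle \sum_t T^{l_tn_k}_{U,W}z,\sum_t T^{l_tn_k}_{U,W}z\rangle$ splits into a sum of positive terms whose norms are dominated by $\|W_{j+l n_k}\cdots W_{j+1}D^{(k)}_j\|^2\|x\|^2$, and summability of these squared norms gives (sub)series convergence. Your observation that for an orthogonal family in a Hilbert $C^{\ast}$-module one has $\|\sum_{n\in F}x_n\|^2=\|\sum_{n\in F}\langle x_n,x_n\rangle\|\le\sum_{n\in F}\|x_n\|^2$ is correct and is the right abstract formulation of what the paper computes by hand.

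One step is elided in your write-up and should be made explicit: hypothesis (2)(i)--(ii) controls only the exponents $n=ln_k$, $l\ge 1$, not all $n$, so the series in (i) is \emph{not} "precisely" $\sum_n\|T^n_{U,W}x\|^2$ but only its restriction to the arithmetic progression $n_k\mathbb{N}$. You must therefore pass from unconditional convergence of $\sum_l T^{ln_k}_{U,W}z$ to that of the full series $\sum_n T^n_{U,W}z$, e.g.\ by writing $\sum_n T^n_{U,W}z=\sum_{r=0}^{n_k-1}T^r_{U,W}\bigl(\sum_l T^{ln_k}_{U,W}z\bigr)$ and using continuity of the finitely many $T^r_{U,W}$ (equivalently, $\|T^{ln_k+r}_{U,W}z\|\le\|T_{U,W}\|^{n_k}\|T^{ln_k}_{U,W}z\|$ keeps the full squared-norm series summable). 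The paper dispatches this with a single sentence, but without it your identification of the hypothesis with the squared-norm series of the orbit is not literally correct. With that line added, your argument is complete and matches the paper's.
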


\begin{proof}
Let $J,m\in \mathbb{N}$. Assume that $\{n_{k}\}_{k}$ is a strictly increasing sequence and for every $i\in[J]$, $D^{(k)}_{i}$ is a sequence of operators on $B_{0}(\mathcal{H})$ satisfying the condition (2). We may assume $2J<n_{1}<n_{2}...$. We define
\[E_{J,m}:=\{z=(z_{j})_{j}\in \ell_{2}(\mathcal{A}): \textrm{ there exist } x=(x_{j})_{j}\in F_{J,m} \textrm{ and } k\in \mathbb{N} \textrm{ such that }\]
\[z_{j}=D^{(k)}_{j}x_{j} \textrm{ for } j\in [J], z_{j}=0 \textrm{ for } j\notin [J]\}\]
and $E:=\bigcup_{J,m\in\mathbb{N}}E_{J,m}$.
Then $F_{J,m}\subset \overline{E}_{J,m}$ ($\overline{E}_{J,m}$ is the closure of $E_{J,m}$). In fact for every $x\in F_{J,m}$, there exists a sequence $\{z^{(k)}\}_{k}\subset E_{J,m}$ which $z^{(k)}=(z^{(k)}_{j})_{j}$ is given by
\begin{displaymath}
z^{(k)}_{j}=\left\{ \begin{array}{ll}
 D^{(k)}_{j}x_{j}, & \textrm{for } j \in [J],\\
 0, & \textrm{else}.
\end{array} \right.
\end{displaymath}

Since $\lim_{k\rightarrow\infty}\|D^{(k)}_{j}-P_{m}\|=0$ for every $j\in[J]$, we have that
\begin{eqnarray}\begin{split}\nonumber
 \lim_{k\rightarrow\infty}\|z^{(k)}-x\|&\leq \lim_{k\rightarrow\infty}\sum_{j\in[J]}\|z^{(k)}_{j}-x_{j}\|
\\
&\leq\lim_{k\rightarrow\infty}\sum_{j\in[J]}\|D^{(k)}_{j}-P_{m}\|\|x_{j}\|=0,
\end{split}\end{eqnarray}
thus $\lim_{k\rightarrow\infty}z^{(k)}=x$. Therefore $E$ is dense in $\ell_{2}(\mathcal{A})$.

Now we show that $\sum^{\infty}_{n=1} T^{n}_{U,W}(z)$ and $\sum^{\infty}_{n=1} S^{n}_{U,W}(z)$ converge unconditionally for every $z\in E$. From the definition of E, for every $z\in E$ there exist $J,m\in\mathbb{N}$, $x=(x_{j})_{j}\in F_{J,m}$ and $k\in \mathbb{N}$ such that $z_{j}=D^{(k)}_{j}x_{j}$ for $j\in [J]$ and $z_{j}=0$ for $j\notin [J]$ in which $\{n_{k}\}_{k}$ and $\{D^{(k)}_{i}\}_{k}$ with $i\in[J]$ satisfy the condition (2). Since we can see that if $\sum^{\infty}_{l=1}T^{ln_{k}}_{U,W}(z)$ converges unconditionally, then $\sum^{\infty}_{n=1} T^{n}_{U,W}(z)$ converges unconditionally, we verify that for every sequence $\{l_{t}\}_{t}\subset \mathbb{N}$, $\sum^{\infty}_{t=1}T^{l_{t}n_{k}}_{U,W}(z)$ converges.
In fact, since $n_{k}>2J$, we have that
\begin{eqnarray}\begin{split}\nonumber
&\left(\sum^{\infty}_{t=1}T^{l_{t}n_{k}}_{U,W}(z)\right)_{j}=\sum^{\infty}_{t=1}\left(T^{l_{t}n_{k}}_{U,W}(z)\right)_{j}\\
&=\left\{ \begin{array}{ll}
T^{l_{s}n_{k}}_{U,W}(z)_{j}=W_{j}W_{j-1}\ldots W_{j+1-l_{s}n_{k}}z_{j-l_{s}n_{k}}U^{l_{s}n_{k}}, & \textrm{for some } s\in \mathbb{N}, j\in [J]+l_{s}n_{k},\\
0, & \textrm{else}.
\end{array} \right.
\end{split}\end{eqnarray}
Then it follows that
\begin{eqnarray}\begin{split}\nonumber
&\sum_{j\in\mathbb{Z}}{\left(\sum^{\infty}_{t=1}T^{l_{t}n_{k}}_{U,W}(z)\right)}^{\ast}_{j}\left(\sum^{\infty}_{t=1}T^{l_{t}n_{k}}_{U,W}(z)\right)_{j}\\
&=\sum^{\infty}_{s=1}\sum_{j\in[J]+l_{s}n_{k}}\left(\sum^{\infty}_{t=1}T^{l_{t}n_{k}}_{U,W}(z)\right)^{\ast}_{j}\left(\sum^{\infty}_{t=1}
T^{l_{t}n_{k}}_{U,W}(z)\right)_{j}\\
&=\sum^{\infty}_{s=1}\sum_{j\in[J]}\left(T^{l_{s}n_{k}}_{U,W}(z)\right)^{\ast}_{j+l_{s}n_{k}}\left(T^{l_{s}n_{k}}_{U,W}(z)\right)_{j+l_{s}n_{k}}\\
&=\sum_{j\in[J]}\sum^{\infty}_{s=1}U^{-l_{s}n_{k}}{x_{j}}^{\ast}{D^{(k)}_{j}}^{\ast}{W^{\ast}_{j+1}}\ldots {W^{\ast}_{j+l_{s}n_{k}}}W_{j+l_{s}n_{k}}\ldots W_{j+1}D^{(k)}_{j}x_{j}U^{l_{s}n_{k}}.
\end{split}\end{eqnarray}
For $j\in[J]$ and $m\in\mathbb{N}$, we have that
\begin{eqnarray}\begin{split}\nonumber
 \|U^{-l_{m}n_{k}}{x_{j}}^{\ast}&{D^{(k)}_{j}}^{\ast}{W^{\ast}_{j+1}}\ldots {W^{\ast}_{j+l_{m}n_{k}}}W_{j+l_{m}n_{k}}\ldots W_{j+1}D^{(k)}_{j}x_{j}U^{l_{m}n_{k}}\|\\
 &=\|W_{j+l_{m}n_{k}}\ldots W_{j+1}D^{(k)}_{j}x_{j}U^{l_{m}n_{k}}\|^{2}\\
 &\leq \|W_{j+l_{m}n_{k}}\ldots W_{j+1}D^{(k)}_{j}\|^{2}\|x\|^{2}.
\end{split}\end{eqnarray}
  Since $\sum^{\infty}_{l=1}\|W_{j+ln_{k}}\ldots W_{j+1}D^{(k)}_{j}\|^{2}$ converges from the condition $(i)$ of $(2)$,

\[\sum^{\infty}_{s=1}\|U^{-l_{s}n_{k}}{x_{j}}^{\ast}{D^{(k)}_{j}}^{\ast}{W^{\ast}_{j+1}}\ldots {W^{\ast}_{j+l_{s}n_{k}}}W_{j+l_{s}n_{k}}\ldots W_{j+1}D^{(k)}_{j}x_{j}U^{l_{s}n_{k}}\|\] converges, thus
\[\sum^{\infty}_{s=1}U^{-l_{s}n_{k}}{x_{j}}^{\ast}{D^{(k)}_{j}}^{\ast}{W^{\ast}_{j+1}}\ldots {W^{\ast}_{j+l_{s}n_{k}}}W_{j+l_{s}n_{k}}\ldots W_{j+1}D^{(k)}_{j}x_{j}U^{l_{s}n_{k}}\] also converges. Therefore $\sum^{\infty}_{t=1}T^{l_{t}n_{k}}_{U,W}(z)$ converges.
Similarly $\sum^{\infty}_{t=1}S^{l_{t}n_{k}}_{U,W}(z)$ converges. This completes the proof.
\end{proof}

We note that the condition $(2)$ in Proposition \ref{pro4.1} is weaker than the condition $(ii)$ of Proposition 3.4 in  \cite{I24}. Following example shows that Proposition \ref{pro4.1} improves  remarkably the Proposition 3.4 in  \cite{I24}.

\begin{example}\textnormal{
Let $\mathcal{H}$ be a separable Hilbert space and $W=\{W_{j}\}_{j\in\mathbb{Z}}$ be a sequence of operators in $B(\mathcal{H})$ which is defined
\begin{displaymath}
W_{i}(e_{j})=\left\{ \begin{array}{ll}
\frac{i}{i+1}e_{j+1}, &  \textrm{for } j\geq 0,\\
\frac{i+1}{i}e_{j+1}, & \textrm{for } j< 0,
\end{array} \right.
\end{displaymath}
by for $i>0$, $W_{i}=W^{-1}_{-i}$ for $i<0$ and $W_{0}=I$ is the identity operator. Let $J, m\in \mathbb{N}$.
Then for $i\geq 0$ and $l>m$, we have that
\begin{displaymath}
W_{i+l}W_{i+l-1}...W_{i+1}P_{m}(e_{j})=\left\{ \begin{array}{ll}
0, & \textrm{for } j>m, \\
\frac{i+1}{i+l+1}e_{j+l}, &  \textrm{for } m\geq j\geq 0,\\
\frac{(i-j+1)^{2}}{(i+1)(i+l+1)}e_{j+l}, &  \textrm{for } 0> j\geq -m,\\
0, & \textrm{for } -m>j,
\end{array} \right.
\end{displaymath}
thus $\|W_{i+l}W_{i+l-1}...W_{i+1}P_{m}\|=\frac{(i+m+1)^{2}}{(i+1)(i+l+1)}$
and for $i<0$ and $l>m-i$,
\begin{eqnarray}\begin{split}\nonumber
W_{i+l}W_{i+l-1}...W_{i+1}P_{m}(e_{j})&=W_{i+l}W_{i+l-1}...W_{-i}P_{m}(e_{j})\\
&=\left\{ \begin{array}{ll}
0, & \textrm{for } j>m, \\
\frac{-i}{i+l+1}e_{j+l+2i+1}, &  \textrm{for } m\geq j\geq 0,\\
\frac{(-i-j)^{2}}{(-i)(i+l+1)}e_{j+l+2i+1}, &  \textrm{for } 0> j\geq -m,\\
0, & \textrm{for } -m>j.
\end{array} \right.
\end{split}\end{eqnarray}
This means that
\[\|W_{i+l}W_{i+l-1}...W_{i+1}P_{m}\|=\frac{(m-i)^{2}}{(-i)(i+l+1)}.\]
Therefore $\sum^{\infty}_{l=1}\|W_{i+l}W_{i+l-1}...W_{i+1}P_{m}\|^{2}$ converges for every $i\in\mathbb{Z}$ and $m\in \mathbb{N}$. Similarly $\sum^{\infty}_{l=1}\|W^{-1}_{i-l+1}W^{-1}_{i-l+2}...W^{-1}_{i}P_{m}\|^{2}$ converges for every $i\in\mathbb{Z}$ and $m\in \mathbb{N}$. Setting $D^{(k)}_{j}=P_{m}$ for $j\in[J]$, $k\in\mathbb{N}$, we have that
 $\sum^{\infty}_{l=1}\|W_{j+ln_{k}}\ldots W_{j+1}D^{(k)}_{j}\|^{2}
=\sum^{\infty}_{l=1}\|W_{j+ln_{k}}\ldots W_{j+1}P_{m}\|^{2}$ and $\sum^{\infty}_{l=1}\|W^{-1}_{j-ln_{k}+1}\ldots W^{-1}_{j}D^{(k)}_{j}\|^{2}$ converge for all $j\in[J]$, $k\in \mathbb{N}$ and $n_{k}\in \mathbb{N}$. Thus $T_{U,W}$ satisfies Frequent Hypercyclicity Criterion from Proposition \ref{pro4.1}.}

\textnormal{
We note that $\sum^{\infty}_{l=1}\|W_{j+ln_{k}}\ldots W_{j+1}D^{(k)}_{j}\|$=$\sum^{\infty}_{l=1}\|W_{j+ln_{k}}\ldots
W_{j+1}P_{m}\|=O{( \sum^ {\infty}_{l=1}\frac{1}{j+ln_{k}})}$ diverges, which does not satisfy the condition $(ii)$ of Proposition 3.4 in \cite{I24}.
}
\end{example}

Following theorem shows in the case that $U$ is the identity $I\in B(\mathcal{H})$, some equivalent conditions for weighted shift $T_{I, W}$ on $\ell_{2}(\mathcal{A})$ to satisfy the Frequent Hypercyclicity Criterion, especially that $T_{I, W}$ on $\ell_{2}(\mathcal{A})$ is chaotic if and only if it satisfies the Frequent Hypercyclicity Criterion like the case of the invertible bilateral weighted shift on $\ell^{p}(\mathbb{Z})$(\cite{BR15}).

 We will denote the operator $T_{I,W}$ by $T_{W}$ for short.

\begin{theorem}\label{th2.4}
The following assertions are equivalent.

$(1)$ $T_{W}$ is chaotic.

$(2)$ $T_{W}$ satisfies Frequent Hypercyclicity Criterion.

$(3)$ For every $J,m\in\mathbb{N}$ there exist a strictly increasing sequence $\{n_{k}\}_{k}\in \mathbb{N}$ and a sequence $\{D^{(k)}_{i}\}_{k}$ for $i\in [J]$ of operators in $B_{0}(\mathcal{H})$ such that for all $j\in[J]$
\[\lim_{k\rightarrow\infty}\|D^{(k)}_{j}-P_{m}\|=0\]
and

$(i)$ $\sum^{\infty}_{l=1}{D^{(k)}_{j}}^{\ast}{W^{\ast}_{j+1}}\ldots {W^{\ast}_{j+ln_{k}}}W_{j+ln_{k}}\ldots W_{j+1}D^{(k)}_{j}$ converges for all $k\in \mathbb{N}$ and $j\in[J]$,

$(ii)$ $\sum^{\infty}_{l=1}{D^{(k)}_{j}}^{\ast}{{W^{-1}}^{\ast}_{j}}\ldots {{W^{-1}}^{\ast}_{j-ln_{k}+1}}W^{-1}_{j-ln_{k}+1}\ldots W^{-1}_{j}D^{(k)}_{j}$ converges for all $k\in \mathbb{N}$ and $j\in[J]$.

In particular if $T_{W}$ is chaotic, then it is frequently hypercyclic and mixing.
\end{theorem}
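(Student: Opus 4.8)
The plan is to close the cycle of implications $(1)\Rightarrow(3)\Rightarrow(2)\Rightarrow(1)$. The step $(2)\Rightarrow(1)$ is free: an operator satisfying the Frequent Hypercyclicity Criterion is chaotic by Proposition 9.11 of \cite{GM11}. The final assertion will then follow, because once $(1)\Rightarrow(2)$ is established, a chaotic $T_{W}$ satisfies the criterion and is therefore both frequently hypercyclic (by the criterion itself) and mixing (again by Proposition 9.11 of \cite{GM11}). Throughout the argument I use $U=I$, which is precisely what makes condition $(3)$ — phrased via convergence of \emph{operator} series rather than the norm series of Proposition \ref{pro4.1} — the correct hypothesis.

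For $(1)\Rightarrow(3)$, which I regard as the main step, I would exploit the density of the periodic points of $T_{W}$. Fix $J,m\in\mathbb{N}$ and let $\tilde{x}\in F_{J,m}$ be the element with $\tilde{x}_{j}=P_{m}$ for $j\in[J]$ and $\tilde{x}_{j}=0$ otherwise. By chaos there are periodic points $y^{(k)}\to\tilde{x}$, say of period $N_{k}$, so that $T_{W}^{N_{k}}y^{(k)}=y^{(k)}$. With $U=I$ this periodicity reads $y^{(k)}_{i}=W_{i}\cdots W_{i-N_{k}+1}\,y^{(k)}_{i-N_{k}}$, hence iterating forward and backward,
\[
y^{(k)}_{j+lN_{k}}=W_{j+lN_{k}}\cdots W_{j+1}\,y^{(k)}_{j},\qquad
y^{(k)}_{j-lN_{k}}=W^{-1}_{j-lN_{k}+1}\cdots W^{-1}_{j}\,y^{(k)}_{j}.
\]
I would set $D^{(k)}_{j}:=y^{(k)}_{j}$ and take $n_{k}$ to be a strictly increasing sequence of multiples of $N_{k}$ (a multiple of a period is again a period, and passing to it only thins the series out). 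Since convergence in $\ell_{2}(\mathcal{A})$ dominates each coordinate, $\|D^{(k)}_{j}-P_{m}\|\le\|y^{(k)}-\tilde{x}\|\to0$. Substituting the displayed relations, the series in $(3)(i)$ becomes $\sum_{l}(y^{(k)}_{j+ln_{k}})^{\ast}y^{(k)}_{j+ln_{k}}$ and that in $(3)(ii)$ becomes $\sum_{l}(y^{(k)}_{j-ln_{k}})^{\ast}y^{(k)}_{j-ln_{k}}$; both are sub-series of $\sum_{i}(y^{(k)}_{i})^{\ast}y^{(k)}_{i}$, which converges in norm because $y^{(k)}\in\ell_{2}(\mathcal{A})$.

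For $(3)\Rightarrow(2)$ I would reuse the dense set $E=\bigcup_{J,m}E_{J,m}$ built from the $D^{(k)}_{j}$ as in the proof of Proposition \ref{pro4.1}, and verify unconditional convergence of $\sum_{n}T^{n}_{W}z$ and $\sum_{n}S^{n}_{W}z$ for $z\in E$. The gain from $U=I$ is that the operator-valued inner product of a partial sum $\sum_{t}T^{l_{t}n_{k}}_{W}(z)$ collapses to
\[
\sum_{j\in[J]} x_{j}^{\ast}\Big(\sum_{s}{D^{(k)}_{j}}^{\ast}W^{\ast}_{j+1}\cdots W^{\ast}_{j+l_{s}n_{k}}W_{j+l_{s}n_{k}}\cdots W_{j+1}D^{(k)}_{j}\Big)x_{j},
\]
with no surviving conjugation by powers of $U$. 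Condition $(3)(i)$ gives norm convergence of the full positive-operator series, and the symmetric computation with the inverse $S_{W}$ together with $(3)(ii)$ handles the backward sum, so $(2)$ follows.

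The main obstacle is the justification hidden in the last sentence. In $\mathcal{A}=B_{0}(\mathcal{H})$ norm convergence of a series of positive operators is strictly weaker than summability of their norms, so one cannot simply quote Proposition \ref{pro4.1}; instead I must deduce norm convergence of an arbitrary sub-series indexed by $\{l_{s}\}$ from that of the full series, which I would do using positivity together with the order-monotonicity of the norm ($0\le C\le D\Rightarrow\|C\|\le\|D\|$) applied to the tails. The only other delicate point — ensuring $\{n_{k}\}$ can be chosen strictly increasing (with $2J<n_{1}$) without destroying the convergence in $(3)$ — is settled by the observation that passing from a period to its multiples preserves both series conditions.
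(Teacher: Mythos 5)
Your proposal is correct and follows essentially the same route as the paper: $(1)\Rightarrow(3)$ via periodic points $y^{(k)}$ approximating the finitely supported element with $P_{m}$ in the $[J]$-coordinates, setting $D^{(k)}_{j}:=y^{(k)}_{j}$ and extracting the two series as (unconditionally convergent, by positivity) subseries of $\sum_{i}y^{(k)\ast}_{i}y^{(k)}_{i}$; $(3)\Rightarrow(2)$ by rerunning the computation of Proposition \ref{pro4.1} and using positivity plus order-monotonicity of the norm to pass to arbitrary subseries; and $(2)\Rightarrow(1)$ together with the final assertion from Proposition 9.11 of \cite{GM11}. Your explicit remark that norm convergence of a positive operator series in $B_{0}(\mathcal{H})$ is weaker than summability of the norms, so condition $(3)$ genuinely differs from the criterion in Proposition \ref{pro4.1}, is exactly the point the paper's proof relies on.
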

\begin{proof}
First, we show that $(1)\Rightarrow(3)$. Assume that $T_{W}$ is chaotic. And let $J,m\in\mathbb{N}$, and define $x=(x_{j})_{j}\in \ell_{2}{(\mathcal{A})}$ by $x_{j}:=P_{m}$ for all $j\in[J]$, and $x_{j}:=0$ for all $j\in{\mathbb{Z}\setminus[J]}$. Then for every $k\in \mathbb{N}$, there exist an element $y^{(k)}\in \ell_{2}{(\mathcal{A})}$ and $n_{k}\in\mathbb{N}$ such that $\|y^{(k)}-x\|\ \leq 2^{-k}$ and $T^{n_{k}}_{W}(y^{(k)})=y^{(k)}$.
Since  $T^{n_{k}}_{W}(y^{(k)})_{j}=W_{j}W_{j-1}\ldots W_{j+1-n_{k}}y^{(k)}_{j-n_{k}}$ for every $j\in\mathbb{Z}$, we have for every $j\in [J]$ and $l\in\mathbb{N}$ that
 \begin{eqnarray}\begin{split}\nonumber
 W_{j}W_{j-1}\ldots W_{j+1-ln_{k}}y^{(k)}_{j-ln_{k}}&=y^{(k)}_{j},\\
  y^{(k)}_{j-ln_{k}}&=W^{-1}_{j+1-ln_{k}}\ldots W^{-1}_{j-1}W^{-1}_{j}y^{(k)}_{j},\\
   y^{(k)}_{j+ln_{k}}&=W_{j+ln_{k}}\ldots W_{j+1}y^{(k)}_{j}.
 \end{split}\end{eqnarray}

Then we get that
\begin{eqnarray}\begin{split}\nonumber
\langle y^{(k)}, y^{(k)}\rangle &=\sum_{j\in\mathbb{Z}}{y^{(k){\ast}}_{j}}y^{(k)}_{j}\\
&=\sum^{n_{k}}_{i=1}\{{y^{(k){\ast}}_{i}}y^{(k)}_{i}+\sum^{\infty}_{l=1}{y^{(k){\ast}}_{i+ln_{k}}}y^{(k)}_{i+ln_{k}}+
\sum^{\infty}_{l=1}{y^{(k){\ast}}_{i-ln_{k}}}y^{(k)}_{i-ln_{k}}\\
&=\sum^{n_{k}}_{i=1}\{{y^{(k){\ast}}_{i}}y^{(k)}_{i}+\sum^{\infty}_{l=1}{y^{(k){\ast}}_{i}}{W^{\ast}_{i+1}}\ldots {W^{\ast}_{i+ln_{k}}}W_{i+ln_{k}}\ldots W_{i+1}y^{(k)}_{i}+\\
&+\sum^{\infty}_{l=1}{y^{(k){\ast}}_{i}}{W^{-1{\ast}}_{i}}\ldots {W^{-1{\ast}}_{i-ln_{k}+1}}W^{-1}_{i-ln_{k}+1}\ldots W^{-1}_{i}y^{(k)}_{i}\}.
\end{split}\end{eqnarray}
Since $\sum_{j\in\mathbb{Z}}{y^{(k){\ast}}_{j}}y^{(k)}_{j}$ converges and ${y^{(k){\ast}}_{j}}y^{(k)}_{j}$ is positive for every $k,j\in \mathbb{N}$,   
$\sum_{j\in\mathbb{Z}}{y^{(k){\ast}}_{j}}y^{(k)}_{j}$ converges unconditionally. Therefore we have that
\[\sum^{\infty}_{l=1}{y^{(k){\ast}}_{i}}{W^{\ast}_{i+1}}\ldots {W^{\ast}_{i+ln_{k}}}W_{i+ln_{k}}\ldots W_{i+1}y^{(k)}_{i}\]
and
\[\sum^{\infty}_{l=1}{y^{(k)}_{i}}^{\ast}{W^{-1{\ast}}_{i}}\ldots {W^{-1{\ast}}_{i-ln_{k}+1}}W^{-1}_{i-ln_{k}+1}\ldots W^{-1}_{i}y^{(k)}_{i}\]
 converge for all $i\in[J]$.
We conclude $(1)\Rightarrow(3)$ by defining $D^{(k)}_{j}:=y^{(k)}_{j}$ for all $j\in[J]$ and $k\in\mathbb{N}$.

$(2)\Rightarrow(1)$ is trivial. Now it is sufficient to show that $(3)\Rightarrow(2)$.
 In the same way with the proof Propostion \ref{pro4.1}, we get that
\begin{eqnarray}\begin{split}\nonumber
\sum_{j\in\mathbb{Z}}&\left(\sum^{\infty}_{t=1}T^{l_{t}n_{k}}_{W}(z)\right)^{\ast}_{j}\left(\sum^{\infty}_{t=1}T^{l_{t}n_{k}}_{W}(z)\right)_{j}=\\
&=\sum_{j\in[J]}\sum^{\infty}_{m=1}{x^{\ast}_{j}}{D^{(k){\ast}}_{j}}{W^{\ast}_{j+1}}\ldots {W^{\ast}_{j+l_{m}n_{k}}}W_{j+l_{m}n_{k}}\ldots W_{j+1}D^{(k)}_{j}x_{j}\\
&=\sum_{j\in[J]}{x^{\ast}_{j}}\left(\sum^{\infty}_{m=1}{D^{(k){\ast}}_{j}}{W^{\ast}_{j+1}}\ldots {W^{\ast}_{j+l_{m}n_{k}}}W_{j+l_{m}n_{k}}\ldots W_{j+1}D^{(k)}_{j}\right)x_{j}.
\end{split}\end{eqnarray}

Since the operator ${D^{(k){\ast}}_{j}}{W^{\ast}_{j+1}}\ldots {W^{\ast}_{j+ln_{k}}}W_{j+ln_{k}}\ldots W_{j+1}D^{(k)}_{j}$ is positive for every $k,j\in\mathbb{N}$, $\sum^{\infty}_{m=1}{D^{(k){\ast}}_{j}}{W^{\ast}_{j+1}}\ldots {W^{\ast}_{j+l_{m}n_{k}}}W_{j+l_{m}n_{k}}\ldots W_{j+1}D^{(k)}_{j}$ converges unconditionally from the condition (i) of (3). Therefore $\sum^{\infty}_{t=1}T^{l_{t}n_{k}}_{W}(z)$ converges, thus $\sum^{\infty}_{n=1} T^{n}_{W}(z)$ converges unconditionally. Similarly $\sum^{\infty}_{n=1} S^{n}_{W}(z)$ converges unconditionally, which concludes the proof.
\end{proof}

\subsection{Disjoint hypercyclicity for $T_{U,W}$}

It is known that  any tuple of  bilateral shifts  on $\ell^{p}(\mathbb{Z})$ containing an invertible shift fails to be disjoint hypercyclic  (\cite{BMS14}).  In this subsection
the we show that as for the generalized weighted shifts on $\ell_{2}(\mathcal{A})$ we can get a tuple of disjoint hypercyclic weighted shifts  composed all invertible ones. We can see in \cite{R24} and \cite{BP07} that as for the pseudo-shifts on $\ell^{p}(\mathbb{Z})$, there exists a
tuple of disjoint hypercyclic pseudo-shifts containing invertible ones.

The following theorem characterizes disjoint hypercyclicity of the generalized bilateral weighted shift operators $T_{U,W}$ on $\ell_{2}(\mathcal{A})$. We note that $T_{U,W}$ is invertible.

\begin{theorem}\label{the5.1}
Let $W^{(1)}$, $W^{(2)}$, ..., $W^{(N)}$ be uniformly bounded sequences of invertible operators in $B(\mathcal{H})$ and $U^{(1)}$, $U^{(2)}$, ..., $U^{(N)}$ be unitary operators in $B(\mathcal{H})$. Assume that for each $m\in \mathbb{N}$ there exists an $N_{m}\in\mathbb{N}$ such that for each $n\geq N_{m}$ and each pair of distinct $s,l\in \{1,2, ..., N\}$
\begin{equation} \label{eq1}
{U^{(s)}}^{n}{U^{(l)}}^{-n}(L_{m})\perp L_{m}\tag{*}.
\end{equation}
Then the following assertions are equivalent.

$(1)$ The operators $T_{U^{(1)},W^{(1)}}$, $T_{U^{(2)},W^{(2)}}$, ..., $T_{U^{(N)},W^{(N)}}$ are densely disjoint hypercyclic.

$(2)$ For every $J,m \in\mathbb{N}$, there exist a strictly increasing sequence $\{n_{k}\}_{k}\subset\mathbb{N}$ and sequences
$\{D^{(k)}_{j}\}^{\infty}_{k=1}$ and $\{G^{(k)}_{1,j}\}^{\infty}_{k=1}$, ..., $\{G^{(k)}_{N,j}\}^{\infty}_{k=1}$
 for all $j\in [J]$ of operators in $B_{0}(\mathcal{H})$ such that for each $ j\in [J]$ and $l\in\{1,2,...,N\}$,
\[\lim_{k\rightarrow\infty}\|D^{(k)}_{j}-P_{m}\|=0, \lim_{k\rightarrow\infty}\|G^{(k)}_{l,j}-P_{m}\|=0,\]
\[\lim_{k\rightarrow\infty}\|W^{(l)}_{j+n_{k}}W^{(l)}_{j+n_{k}-1},...,W^{(l)}_{j+1}D^{(k)}_{j}\|=0,
\lim_{k\rightarrow\infty}\|{W^{(l)}_{j-n_{k}+1}}^{-1}{W^{(l)}_{j-n_{k}+2}}^{-1},...,{W^{(l)}_{j}}^{-1}G^{(k)}_{l,j}\|=0\]
 and, for each $j\in[J]$ and each pair of distinct $s,l\in \{1,2,,,N\}$,
 \[\lim_{k\rightarrow\infty}\|W^{(s)}_{j},...,W^{(s)}_{j-n_{k}+1}{W^{(l)}_{j-n_{k}+1}}^{-1}{W^{(l)}_{j-n_{k}+2}}^{-1},...,
 {W^{(l)}_{j}}^{-1}G^{(k)}_{l,j}\|=0.\]
\end{theorem}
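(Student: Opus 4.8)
The plan is to pass to disjoint topological transitivity and then read condition (2) off as a two-sided ``tail estimate.'' By Proposition 2.3 of \cite{BP07}, assertion (1) is equivalent to disjoint topological transitivity of $T_{U^{(1)},W^{(1)}},\dots,T_{U^{(N)},W^{(N)}}$; and since $F=\bigcup_{J,m}F_{J,m}$ is dense in $\ell_2(\mathcal A)$ and each $T_{U^{(l)},W^{(l)}}$ is invertible with inverse $S_{U^{(l)},W^{(l)}}$, it suffices to test transitivity on basic neighborhoods of elements $x^{(0)},x^{(1)},\dots,x^{(N)}\in F_{J,m}$ sharing one pair $J,m$. Throughout I abbreviate the weight products occurring in $T^{n}$ and $S^{n}$ by $A^{(l)}_{j,n}:=W^{(l)}_{j}W^{(l)}_{j-1}\cdots W^{(l)}_{j-n+1}$ and $B^{(l)}_{j,n}:=(W^{(l)}_{j-n+1})^{-1}\cdots(W^{(l)}_{j})^{-1}$, so that $B^{(l)}_{j,n}A^{(l)}_{j,n}=I$, and I always take $n_{k}>2J$, so that the blocks $[J]$, $[J]-n_{k}$, $[J]+n_{k}$ are pairwise disjoint.

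For $(2)\Rightarrow(1)$ I would exhibit the transitivity witness explicitly. Given the data of (2), set
\[
z:=\big(D^{(k)}_{j}x^{(0)}_{j}\big)_{j\in[J]}\;+\;\sum_{l=1}^{N}S^{\,n_{k}}_{U^{(l)},W^{(l)}}\!\Big(\big(G^{(k)}_{l,j}x^{(l)}_{j}\big)_{j\in[J]}\Big),
\]
where each displayed block is extended by zero outside the indicated index set, so that the first summand is supported on $[J]$ and each pullback on $[J]-n_{k}$. That $z\to x^{(0)}$ follows from $D^{(k)}_{j}\to P_{m}$ on the block $[J]$ and from the backward-decay limit $\|B^{(l)}_{j,n_{k}}G^{(k)}_{l,j}\|\to0$ on the block $[J]-n_{k}$. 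Applying $T^{\,n_{k}}_{U^{(l_{0})},W^{(l_{0})}}$, the first summand is carried onto $[J]+n_{k}$ and annihilated in the limit by $\|A^{(l_{0})}_{j+n_{k},n_{k}}D^{(k)}_{j}\|\to0$; the summand $l=l_{0}$ telescopes, via $T^{\,n_{k}}S^{\,n_{k}}=\mathrm{id}$, to $G^{(k)}_{l_{0},j}x^{(l_{0})}_{j}\to x^{(l_{0})}_{j}$; and for $l\neq l_{0}$ the summand returns to $[J]$ with coordinatewise norm at most $\|A^{(l_{0})}_{j,n_{k}}B^{(l)}_{j,n_{k}}G^{(k)}_{l,j}\|\,\|x^{(l)}\|\to0$ by the cross-limit. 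As the unitaries act by isometric right multiplication they change none of these norms, so (*) is not needed for this implication.

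For $(1)\Rightarrow(2)$ I would fix $J,m$, take $x^{(0)}=x^{(1)}=\dots=x^{(N)}$ to be the element of $F_{J,m}$ with every $[J]$-coordinate equal to $P_{m}$, and use disjoint transitivity to produce, for each $k$, a point $z^{(k)}$ and an integer $n_{k}$ — which I may take strictly increasing with $n_{k}\ge N_{m}$ and $n_{k}>2J$ — such that $\|z^{(k)}-x^{(0)}\|<1/k$ and $\|T^{\,n_{k}}_{U^{(l)},W^{(l)}}z^{(k)}-x^{(l)}\|<1/k$ for all $l$. The natural definitions are $D^{(k)}_{j}:=z^{(k)}_{j}$ together with the right-projected
\[
G^{(k)}_{l,j}:=\big(T^{\,n_{k}}_{U^{(l)},W^{(l)}}z^{(k)}\big)_{j}\,P_{m}.
\]
Reading off coordinates gives all the easy limits: $D^{(k)}_{j}\to P_{m}$ and $G^{(k)}_{l,j}\to P_{m}$ directly; the forward-decay limit for $D$ from the vanishing of the $[J]+n_{k}$-coordinates of the target together with isometry of $U^{(l)}$; and the backward-decay limit from $B^{(l)}_{j,n_{k}}G^{(k)}_{l,j}=z^{(k)}_{j-n_{k}}{U^{(l)}}^{n_{k}}P_{m}$, whose norm is $\le\|z^{(k)}_{j-n_{k}}\|\to0$, after the telescoping $B^{(l)}_{j,n_{k}}A^{(l)}_{j,n_{k}}=I$.

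The single genuine difficulty — and the place where hypothesis (*) is indispensable — is the cross-limit. Telescoping again gives $A^{(s)}_{j,n_{k}}B^{(l)}_{j,n_{k}}G^{(k)}_{l,j}=A^{(s)}_{j,n_{k}}z^{(k)}_{j-n_{k}}{U^{(l)}}^{n_{k}}P_{m}$, and the approximation relation for the $s$-th operator, $A^{(s)}_{j,n_{k}}z^{(k)}_{j-n_{k}}\approx P_{m}{U^{(s)}}^{*n_{k}}$, turns this into the interference term $P_{m}{U^{(s)}}^{*n_{k}}{U^{(l)}}^{n_{k}}P_{m}$ up to an error tending to $0$. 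The orthogonality hypothesis (*) provides, for $n_{k}\ge N_{m}$, the orthogonality ${U^{(s)}}^{n_{k}}L_{m}\perp{U^{(l)}}^{n_{k}}L_{m}$, that is $P_{m}{U^{(s)}}^{*n_{k}}{U^{(l)}}^{n_{k}}P_{m}=0$, which annihilates this term and yields the cross-limit. I expect this to be the main obstacle, and it explains why the right-hand projection by $P_{m}$ in the definition of $G^{(k)}_{l,j}$ is essential: without it the interference is only $P_{m}{U^{(s)}}^{*n_{k}}{U^{(l)}}^{n_{k}}$, of norm $1$ and beyond control. This is precisely the mechanism by which the extra unitary room circumvents the obstruction that forbids disjoint hypercyclicity of invertible bilateral shifts on $\ell^{p}(\mathbb Z)$.
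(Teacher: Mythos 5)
Your proposal is correct and follows essentially the same route as the paper: the same transitivity witness $u^{(k)}+\sum_{l}S^{n_{k}}_{U^{(l)},W^{(l)}}v^{(k)}_{l}$ for $(2)\Rightarrow(1)$, and for $(1)\Rightarrow(2)$ the same choices $D^{(k)}_{j}$ from the approximating point and $G^{(k)}_{l,j}=\bigl(T^{n_{k}}_{U^{(l)},W^{(l)}}z^{(k)}\bigr)_{j}P_{m}$, with the hypothesis $(\ast)$ entering only to kill the interference term $P_{m}{U^{(s)}}^{-n_{k}}{U^{(l)}}^{n_{k}}P_{m}$ in the cross-limit. Your observations that $(\ast)$ is not needed for $(2)\Rightarrow(1)$ and that the right multiplication by $P_{m}$ in $G^{(k)}_{l,j}$ is what makes the cross-limit controllable both match the paper exactly (the former is recorded there as the corollary following the theorem).
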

\begin{proof}
First we show $(1)\Rightarrow(2)$. Assume that the operators $T_{U^{(1)},W^{(1)}}$, $T_{U^{(2)},W^{(2)}}$, ..., $T_{U^{(N)},W^{(N)}}$ are densely disjoint hypercyclic.
Let $J,m\in\mathbb{N}$ and define $x=(x_{j})_{j}\in \ell_{2}{(\mathcal{A})}$ by $x_{j}:=P_{m}$ for all $j\in[J]$ and $x_{j}:=0$ for all $j\in \mathbb{Z}\setminus[J]$. Then for every $k\in \mathbb{N}$, there exist an element $y^{(k)}\in \ell_{2}{(\mathcal{A})}$ and $n_{k}\in\mathbb{N}$ such that $\|y^{(k)}-x\|\ \leq 2^{k}$ and $\|T^{n_{k}}_{U^{(l)},W^{(l)}}(y^{(k)})-x\|\leq 2^k$ for each $l\in \{1,2,...,N\}$.

We may assume that the sequence $(n_{k})_{k}$ is strictly increasing, and $\max\{2J,N_{m}\}<n_{1}<n_{2}<\cdots$.

Set
\[D^{(k)}_{j}:=y^{(k)}_{j}P_{m}, G^{(k)}_{l,j}:=W^{(l)}_{j}W^{(l)}_{j-1}...W^{(l)}_{j-n_{k}+1}y^{(k)}_{j-n_{k}}{U^{(l)}}^{n_{k}}P_{m}\]
for each $j\in[J]$ and $l\in \{1,2,...,N\}$. Then we have for $j\in[J]$ and $l\in\{1,2,...,N\}$ that

\begin{eqnarray}\begin{split}\nonumber
\lim_{k\rightarrow\infty}\|D^{(k)}_{j}-P_{m}\|
&=\lim_{k\rightarrow\infty}\|y^{(k)}_{j}P_{m}-P_{m}\|\\
&\leq \lim_{k\rightarrow\infty}\|y^{(k)}_{j}-P_{m}\|\|P_{m}\|\\
&\leq\lim_{k\rightarrow\infty}\|y^{(k)}-x\|=0,
\end{split}\end{eqnarray}

\begin{eqnarray}\begin{split}\nonumber
\lim_{k\rightarrow\infty}\|G^{(k)}_{l,j}-P_{m}\|
&=\lim_{k\rightarrow\infty}\|W^{(l)}_{j}W^{(l)}_{j-1}...W^{(l)}_{j-n_{k}+1}y^{(k)}_{j-n_{k}}{U^{(l)}}^{n_{k}}P_{m}-P_{m}\|\\
&=\lim_{k\rightarrow\infty}\|\left(T^{n_{k}}_{U^{(l)},W^{(l)}}(y^{(k)})\right)_{j}P_{m}-P_{m}\|\\
&\leq \lim_{k\rightarrow\infty}\|\left(T^{n_{k}}_{U^{(l)},W^{(l)}}(y^{(k)})\right)_{j}-P_{m}\|\\
&\leq \lim_{k\rightarrow\infty}\|T^{n_{k}}_{U^{(l)},W^{(l)}}(y^{(k)})-x\|=0,
\end{split}\end{eqnarray}
\begin{eqnarray}\begin{split}\nonumber
\lim_{k\rightarrow\infty}\|W^{(l)}_{j+n_{k}}W^{(l)}_{j+n_{k}-1},...,W^{(l)}_{j+1}D^{(k)}_{j}\|&= \lim_{k\rightarrow\infty}\|\left(T^{n_{k}}_{U^{(l)},W^{(l)}}(y^{(k)})\right)_{j+n_{k}}{U^{(l)}}^{-n_{k}}P_{m}\| \\ &\leq\lim_{k\rightarrow\infty}\|\left(T^{n_{k}}_{U^{(l)},W^{(l)}}(y^{(k)})\right)_{j+n_{k}}\|\\
&\leq \lim_{k\rightarrow\infty}\|T^{n_{k}}_{U^{(l)},W^{(l)}}(y^{(k)})-x\|=0
\end{split}\end{eqnarray}
and
\begin{eqnarray}\begin{split}\nonumber
\lim_{k\rightarrow\infty}\|{W^{(l)}_{j-n_{k}+1}}^{-1}{W^{(l)}_{j-n_{k}+2}}^{-1},...,{W^{(l)}_{j}}^{-1}G^{(k)}_{l,j}\|
&=\lim_{k\rightarrow\infty}\|y^{(k)}_{j-n_{k}}{U^{(l)}}^{n_{k}}P_{m}\|\\
&\leq\lim_{k\rightarrow\infty}\|y^{(k)}_{j-n_{k}}\|\\
&\leq \lim_{k\rightarrow\infty}\|y^{(k)}-x\|=0.
\end{split}\end{eqnarray}

Since for each $n\geq N_{m}$ and each pair of distinct $s,l\in \{1,2,...,N\}$,  ${U^{(s)}}^{n}{U^{(l)}}^{-n}(L_{m})\perp L_{m}$, we have  for each pair of distinct $s,l\in \{1,2,...,N\}$ that
\[P_{m}{U^{(s)}}^{n}{U^{(l)}}^{-n}P_{m}=0.\]
Then it follows that

\begin{eqnarray}\begin{split}\nonumber
&\lim_{k\rightarrow\infty}\|W^{(s)}_{j},...,W^{(s)}_{j-n_{k}+1}{W^{(l)}_{j-n_{k}+1}}^{-1}{W^{(l)}_{j-n_{k}+2}}^{-1},...,
{W^{(l)}_{j}}^{-1}G^{(k)}_{l,j}\|\\
&=\lim_{k\rightarrow\infty}\|\left(T^{n_{k}}_{U^{(s)},W^{(s)}}(y^{(k)})\right)_{j} {U^{(s)}}^{-n}{U^{(l)}}^{n}P_{m}\|\\
&=\lim_{k\rightarrow\infty}\|\left(T^{n_{k}}_{U^{(s)},W^{(s)}}(y^{(k)})\right)_{j} {U^{(s)}}^{-n}{U^{(l)}}^{n}P_{m}-P_{m}{U^{(s)}}^{-n}{U^{(l)}}^{n}P_{m}\|\\
&\leq\lim_{k\rightarrow\infty}\|\left(T^{n_{k}}_{U^{(s)},W^{(s)}}(y^{(k)})\right)_{j}-P_{m}\|\\
&\leq \lim_{k\rightarrow\infty}\|T^{n_{k}}_{U^{(s)},W^{(s)}}(y^{(k)})-x\|=0,\\
\end{split}\end{eqnarray}
which completes the proof of $(1)\Rightarrow(2)$.

Next, we show $(2)\Rightarrow(1)$. In order to show this, it suffices to show that the operators $T_{U^{(1)},W^{(1)}}$, $T_{U^{(2)},W^{(2)}}$, ..., $T_{U^{(N)},W^{(N)}}$ are disjoint topologically transitive. Let $O, V_{1}, V_{2}, ...,  V_{N}$ be nonempty open subsets of $\ell_{2}(\mathcal{A})$. Since $F$ is dense in $\ell_{2}(\mathcal{A})$, we can find some $x=(x_{j})_{j}\in O$ and $y^{(1)}=(y^{(1)}_{j})_{j}\in V_{1}$, ..., $y^{(N)}=(y^{(N)}_{j})_{j}\in V_{N}$ and sufficiently large $J, m\in\mathbb{N}$ such that $x_{j}=P_{m}x_{j}$ and $y^{(l)}_{j}=P_{m}y^{(l)}_{j}$ for all $l\in\{1,2,...,N\}$,   $j\in[J]$ and $x_{j}=0$ and $y^{(l)}_{j}=0$ for all $l\in\{1,2,...,N\}$, $j\in \mathbb{Z}\setminus[J]$. Let the strictly increasing sequence $\{n_{k}\}_{k}\subset\mathbb{N}$ and sequences $\{D^{(k)}_{j}\}^{\infty}_{k=1}$ and $\{G^{(k)}_{1,j}\}^{\infty}_{k=1}$, ..., $\{G^{(k)}_{N,j}\}^{\infty}_{k=1}$ satisfy (ii) for these $J,m\in\mathbb{N}$. For each k, define $u^{(k)}, v^{(k)}_{1}, ..., v^{(k)}_{N}\in \ell_{2}(\mathcal{A})$ by $(u^{(k)})_{j}:=D^{(k)}_{j}x_{j}$ for $j\in[J]$, $(u^{(k)})_{j}:=0$ for $j\in \mathbb{Z}\setminus [J]$, and for each $l\in\{1,2,...,N\}$, $(v^{(k)}_{l})_{j}:=G^{(k)}_{l, j}y^{(l)}_{j}$ for $j\in[J]$, $(v^{(k)}_{l})_{j}:=0$ for $j\in \mathbb{Z}\setminus [J]$. Since $\lim_{k\rightarrow\infty}\|D^{(k)}_{j}-P_{m}\|=0$ and $\lim_{k\rightarrow\infty}\|G^{(k)}_{l,j}-P_{m}\|=0$  for each $j\in [J]$ and $l\in\{1,2,...,N\}$, $\lim_{k\rightarrow\infty}u^{(k)}=x$ and $\lim_{k\rightarrow\infty}v^{(k)}_{l}=y^{(l)}$. Set
\[\varphi_{k}:=u^{(k)}+\sum^{N}_{l=1}S^{n_{k}}_{U^{(l)},W^{(l)}}v^{(k)}_{l}.\]
Then we have that
\begin{eqnarray}\begin{split}\nonumber
\lim_{k\rightarrow\infty}\|\varphi_{k}-x\|&\leq \lim_{k\rightarrow\infty}\|u^{(k)}-x\|+ \lim_{k\rightarrow\infty}\sum^{N}_{l=1}\|S^{n_{k}}_{U^{(l)},W^{(l)}}v^{(k)}_{l}\|\\
&=\lim_{k\rightarrow\infty}\sum^{N}_{l=1}\|S^{n_{k}}_{U^{(l)},W^{(l)}}v^{(k)}_{l}\|\\
&\leq
\lim_{k\rightarrow\infty}\sum^{N}_{l=1}\sum_{j\in\mathbb{Z}}\|(S^{n_{k}}_{U^{(l)},W^{(l)}}v^{(k)}_{l})_{j}\|\\
&=\lim_{k\rightarrow\infty}\sum^{N}_{l=1}\sum_{j\in[J]}\|(S^{n_{k}}_{U^{(l)},W^{(l)}}v^{(k)}_{l})_{j-n_{k}}\|\\
&=\lim_{k\rightarrow\infty}\sum^{N}_{l=1}\sum_{j\in[J]}\|{W^{(l)}_{j-n_{k}+1}}^{-1},...,{W^{(l)}_{j}}^{-1} G^{(k)}_{l,j}y^{(l)}_{j}\|\\
&\leq\lim_{k\rightarrow\infty}\sum^{N}_{l=1}\sum_{j\in[J]}\|{W^{(l)}_{j-n_{k}+1}}^{-1},...,{W^{(l)}_{j}}^{-1} G^{(k)}_{l,j}\| \|y^{(l)}_{j}\|=0
\end{split}\end{eqnarray}
and

\begin{eqnarray}\begin{split}\nonumber
\lim_{k\rightarrow\infty}\|&T^{n_{k}}_{U^{(l)},W^{(l)}}(\varphi_{k})-y^{(l)}\|=\\
&\leq \lim_{k\rightarrow\infty}\|T^{n_{k}}_{U^{(l)},W^{(l)}}(u^{(k)})\|+ \lim_{k\rightarrow\infty}\|v^{(k)}_{l}-y^{(l)}\|\\
&+\lim_{k\rightarrow\infty}\sum^{N}_{s\neq l, s=1}\|T^{n_{k}}_{U^{(l)},W^{(l)}}\left(S^{n_{k}}_{U^{(s)},W^{(s)}}(v^{(k)}_{s})\right)\|
\leq \lim_{k\rightarrow\infty}\sum_{j\in\mathbb{Z}}\|\left(T^{n_{k}}_{U^{(l)},W^{(l)}}(u^{(k)})\right)_{j}\|\\
&+ \lim_{k\rightarrow\infty}\sum^{N}_{s\neq l, s=1}\sum_{j\in\mathbb{Z}}\|\left(T^{n_{k}}_{U^{(l)},W^{(l)}}\left(S^{n_{k}}_{U^{(s)},W^{(s)}}(v^{(k)}_{s})\right)\right)_{j}\|\\
&= \lim_{k\rightarrow\infty}\sum_{j\in[J]}\|W^{(l)}_{j+n_{k}}W^{(l)}_{j+n_{k}-1},...,W^{(l)}_{j+1}D^{(k)}_{j}x_{j}\|\\
&+\lim_{k\rightarrow\infty}\sum^{N}_{s\neq l, s=1}\sum_{j\in[J]}\|W^{(l)}_{j},...,W^{(l)}_{j-n_{k}+1}{W^{(s)}_{j-n_{k}+1}}^{-1}{W^{(s)}_{j-n_{k}+2}}^{-1},...,
 {W^{(s)}_{j}}^{-1}G^{(k)}_{s,j}y^{(s)}_{j}\|\\
&\leq \lim_{k\rightarrow\infty}\sum_{j\in[J]}\|W^{(l)}_{j+n_{k}}W^{(l)}_{j+n_{k}-1},...,W^{(l)}_{j+1}D^{(k)}_{j}\|\|x_{j}\|\\
&+\lim_{k\rightarrow\infty}\sum^{N}_{s\neq l, s=1}\sum_{j\in[J]}\|W^{(l)}_{j},...,W^{(l)}_{j-n_{k}+1}{W^{(s)}_{j-n_{k}+1}}^{-1}{W^{(s)}_{j-n_{k}+2}}^{-1},...,
 {W^{(s)}_{j}}^{-1}G^{(k)}_{s,j}\|\|y^{(s)}_{j}\|=0.
\end{split}\end{eqnarray}

It follows that $\lim_{k\rightarrow\infty}\varphi_{k}=x$ and $\lim_{k\rightarrow\infty}T^{n_{k}}_{U^{(l)},W^{(l)}}(\varphi_{k})=y^{(l)}$ for $l\in\{1,2,...,N\}$. Hence the operators $T_{U^{(1)},W^{(1)}}$, $T_{U^{(2)},W^{(2)}}$, ..., $T_{U^{(N)},W^{(N)}}$ are disjoint topologically transitive.
\end{proof}

In fact, in the proof of Theorem \ref{the5.1}, $(2)\Rightarrow(1)$ shows that if the sequences $W^{(1)},...,W^{(N)}$ satisfy the condition $(2)$, then $T_{U^{(1)},W^{(1)}}$, $T_{U^{(2)},W^{(2)}}$, ..., $T_{U^{(N)},W^{(N)}}$ are disjoint topologically transitive for any unitary operators $U^{(1)},...,U^{(N)}$ on $\mathcal{H}$.
This leads us to deduce the following.

\begin{corollary}
Let $\tilde{U}^{(1)},\tilde{U}^{(2)}, ..., \tilde{U}^{(N)}$ be unitary operators on $\mathcal{H}$ satisfying the condition $(\ast)$. Then $T_{\tilde{U}^{(1)},W^{(1)}}$, $T_{\tilde{U}^{(2)},W^{(2)}}$, ..., $T_{\tilde{U}^{(N)},W^{(N)}}$ are disjoint topologically transitive if and only if $T_{U^{(1)},W^{(1)}}$, $T_{U^{(2)},W^{(2)}}$, ..., $T_{U^{(N)},W^{(N)}}$ are disjoint topologically transitive for any unitary operators $U^{(1)}, ..., U^{(N)}$ on $\mathcal{H}$. In other words, $T_{U^{(1)},W^{(1)}}$, $T_{U^{(2)},W^{(2)}}$, ..., $T_{U^{(N)},W^{(N)}}$ are disjoint topologically transitive for any unitary operators $U^{(1)}, ..., U^{(N)}$ on $\mathcal{H}$ if and only if the operators $W^{(1)}, ..., W^{(N)}$ satisfy the condition $(2)$ in Theorem \ref{the5.1}.
\end{corollary}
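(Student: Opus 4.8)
The plan is to reduce the Corollary entirely to Theorem \ref{the5.1} together with the observation recorded in the remark immediately preceding it. The central point I would emphasize is that condition $(2)$ in Theorem \ref{the5.1} is formulated purely in terms of the weight sequences $W^{(1)},\dots,W^{(N)}$ and the orthogonal projections $P_{m}$; the unitaries $U^{(l)}$ do not appear in it at all. Consequently, condition $(2)$ is an \emph{intrinsic} property of the family $W^{(1)},\dots,W^{(N)}$, independent of any choice of unitary operators. This is what makes the passage "for some unitaries satisfying $(\ast)$" $\leftrightarrow$ "for all unitaries" possible.

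First I would prove the "in other words" biconditional, since the first equivalence then follows at once. For the implication that condition $(2)$ implies disjoint topological transitivity of $T_{U^{(1)},W^{(1)}},\dots,T_{U^{(N)},W^{(N)}}$ \emph{for every} choice of unitaries $U^{(1)},\dots,U^{(N)}$, I would invoke the remark: the proof of $(2)\Rightarrow(1)$ carried out in Theorem \ref{the5.1} never uses the orthogonality hypothesis $(\ast)$. It uses only the three limit conditions of $(2)$ and the fact that each $U^{(l)}$ is unitary, hence norm-preserving (this is precisely what validates the estimates bounding $\|\varphi_{k}-x\|$ and $\|T^{n_{k}}_{U^{(l)},W^{(l)}}(\varphi_{k})-y^{(l)}\|$). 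Thus $(2)$ alone forces disjoint topological transitivity regardless of the unitaries.

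For the converse, suppose $T_{U^{(1)},W^{(1)}},\dots,T_{U^{(N)},W^{(N)}}$ are disjoint topologically transitive for all unitaries. Specializing to the fixed family $\tilde U^{(1)},\dots,\tilde U^{(N)}$, which by hypothesis satisfies $(\ast)$, I would apply the $(1)\Rightarrow(2)$ direction of Theorem \ref{the5.1} — legitimate exactly because $(\ast)$ holds for these unitaries — to conclude that $W^{(1)},\dots,W^{(N)}$ satisfy condition $(2)$. Combining the two implications yields
\[
W^{(1)},\dots,W^{(N)} \text{ satisfy } (2)
\iff
T_{U^{(1)},W^{(1)}},\dots,T_{U^{(N)},W^{(N)}} \text{ disjoint top.\ transitive for all } U^{(l)}.
\]
The first biconditional of the Corollary is then immediate: by Theorem \ref{the5.1} applied to $\tilde U^{(1)},\dots,\tilde U^{(N)}$ (again using $(\ast)$), disjoint topological transitivity of $T_{\tilde U^{(1)},W^{(1)}},\dots,T_{\tilde U^{(N)},W^{(N)}}$ is equivalent to condition $(2)$, which in turn is equivalent to disjoint topological transitivity for all unitaries.

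I do not expect any computational obstacle here, as all the analysis has already been done inside Theorem \ref{the5.1}. The only genuine subtlety — and the step I would be most careful about — is bookkeeping of which implication of Theorem \ref{the5.1} requires $(\ast)$. The direction $(1)\Rightarrow(2)$ essentially uses $(\ast)$, through the identity $P_{m}\,{U^{(s)}}^{n}{U^{(l)}}^{-n}P_{m}=0$, so it may be invoked \emph{only} for the special unitaries $\tilde U^{(l)}$; by contrast, the direction $(2)\Rightarrow(1)$ is unconditional in the unitaries. Keeping these two roles cleanly separated is the crux of the argument.
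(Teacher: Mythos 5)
Your proposal is correct and follows essentially the same route as the paper: the paper's justification is precisely the remark preceding the corollary, namely that the $(2)\Rightarrow(1)$ argument of Theorem \ref{the5.1} never uses the orthogonality condition $(\ast)$ and hence works for arbitrary unitaries, while $(1)\Rightarrow(2)$ is applied only to the fixed unitaries $\tilde{U}^{(1)},\dots,\tilde{U}^{(N)}$ for which $(\ast)$ holds. Your bookkeeping of which implication needs $(\ast)$ is exactly the point the paper intends, and the implicit use of the equivalence between dense disjoint hypercyclicity and disjoint topological transitivity is covered by the preliminaries.
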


The following example shows a tuple of disjoint hypercyclic weighted shifts on $\ell_{2}(\mathcal{A})$, composed by invertible ones.

\begin{example} \textnormal{
Let $\mathcal{H}$ be a separable Hilbert space and $(e_{j})_{j\in\mathbb{Z}}\subset\mathcal{H}$ be an orthonormal basis for $\mathcal{H}$. From Example 1 in \cite{IT21}, we can take unitary operators $U^{(1)},U^{(2)}$ on $\mathcal{H}$ satisfying condition \eqref{eq1}.
And let $W_{1}$ and $W_{2}$ be bounded linear operators on $\mathcal{H}$ defined by
\begin{displaymath}
W_{1}(e_{n})=\left\{ \begin{array}{ll}
 2e_{n+1}, & \textrm{for } n<0,\\
 \frac{1}{2}e_{n+1}, & \textrm{for } n\geq 0,
\end{array} \right.
\end{displaymath}
\begin{displaymath}
W_{2}(e_{n})=\left\{ \begin{array}{ll}
 3e_{n+1}, & \textrm{for } n<0,\\
 \frac{1}{3}e_{n+1}, & \textrm{for } n\geq 0.
\end{array} \right.
\end{displaymath}
Now we define $W^{(1)}_{j}=W_{1}$, $W^{(2)}_{j}=W^{2}_{2}$ for every $j\in \mathbb{Z}$.
By the similar arguments to Example 2.9 in \cite{IT23dis}, we have that}

\begin{eqnarray}\begin{split}\nonumber
\lim_{k\rightarrow\infty}\|W^{(1)}_{j+n_{k}}W^{(1)}_{j+n_{k}-1},...,W^{(1)}_{j+1}P_{m}\|&= \lim_{k\rightarrow\infty}\|W^{n_{k}}_{1}P_{m}\|=0,\\
\lim_{k\rightarrow\infty}\|{W^{(1)}_{j-n_{k}+1}}^{-1}{W^{(1)}_{j-n_{k}+2}}^{-1},...,{W^{(1)}_{j}}^{-1}P_{m}\|&= \lim_{k\rightarrow\infty}\|W^{-n_{k}}_{1}P_{m}\|=0,\\
\lim_{k\rightarrow\infty}\|W^{(2)}_{j+n_{k}}W^{(2)}_{j+n_{k}-1},...,W^{(2)}_{j+1}P_{m}\|&= \lim_{k\rightarrow\infty}\|W^{2n_{k}}_{2}P_{m}\|=0,\\
\lim_{k\rightarrow\infty}\|{W^{(2)}_{j-n_{k}+1}}^{-1}{W^{(2)}_{j-n_{k}+2}}^{-1},...,{W^{(2)}_{j}}^{-1}P_{m}\|&= \lim_{k\rightarrow\infty}\|W^{-2n_{k}}_{2}P_{m}\|=0,
\end{split}\end{eqnarray}
\textnormal{
\[\lim_{k\rightarrow\infty}\|W^{(1)}_{j},...,W^{(1)}_{j-n_{k}+1}{W^{(2)}_{j-n_{k}+1}}^{-1}{W^{(2)}_{j-n_{k}+2}}^{-1},...,
 {W^{(2)}_{j}}^{-1}P_{m}\|=\lim_{k\rightarrow\infty}\|W^{n_{k}}_{1}W^{-2n_{k}}_{2}P_{m}\|=0\]
 and
\[\lim_{k\rightarrow\infty}\|W^{(2)}_{j},...,W^{(2)}_{j-n_{k}+1}{W^{(1)}_{j-n_{k}+1}}^{-1}{W^{(1)}_{j-n_{k}+2}}^{-1},...,
 {W^{(1)}_{j}}^{-1}P_{m}\|=\lim_{k\rightarrow\infty}\|W^{2n_{k}}_{2}W^{-n_{k}}_{1}P_{m}\|=0.\]
Hence it follows that the operators $T_{U^{(1)},W^{(1)}}$, $T_{U^{(2)},W^{(2)}}$ are densely disjoint hypercyclic though they are all invertible.
}
\end{example}

\subsection{$\mathcal{F}$-transitivity for $T_{U,W}$}

Ivkovi\'{c} (\cite{I24}) characterized topological transitivity of $T_{U, W}$. In this section we consider the its $\mathcal{F}$-version for Furstenberg family $\mathcal{F}$.

First, we characterize $\mathcal{F}$-transitive weighted shift operator $T_{U, W}$ on $\ell_{2}(\mathcal{A})$ and thus extend the result of Ivkovi\'{c} (\cite{I24}, Proposition 3.1).

Here we define a notion of uniform $\mathcal{F}$-convergence.

\begin{definition}\textnormal{
Let $I$ be a set, $X_{i}$ be a separable Banach space  for every $i\in I$ and $\mathcal{F}$ be a Furstenberg family.
Assume that  $\{x^{(i)}_{j}\}_{j}\subset X_{i}$ is a sequence of vectors in $X_{i}$ for every $i\in I$. Then $\{x^{(i)}_{j}\}_{j}$ is said to \textit{uniformly $\mathcal{F}$-converge to} $x^{(i)}\in X_{i}$ \textit{for} $i\in I$ (denoted by $x^{(i)}_{j}\stackrel{\mathcal F}{\rightrightarrows} x^{(i)}$, as $j\rightarrow \infty$ for $i\in I$) if for every neighborhood $V_{i}\subset X_{i}$ of $x^{(i)}$, $i\in I$, there exists a set $F\in \mathcal{F}$ such that  $F\subset\{j\in \mathbb{N}:x^{(i)}_{j}\in V_{i}\}$ for any $i\in I$.}
\end{definition}

\begin{proposition}\label{pro3.1}
Let $\mathcal{F}$ be a finitely invariant Furstenberg family. And let $(t_{n})_{n}$ be an unbounded sequence of nonnegative integers. We denote $T_{U,W,n}:=T^{t_{n}}_{U,W}$ for all $n\in \mathbb{N}$. Then the following assertions are equivalent.

$(1)$ $(T_{U,W,n})_{n}$ is $\mathcal{F}$-transitive.

$(2)$ For every $J,m\in\mathbb{N}$ there exist sequences $\{D^{(n)}_{j}\}_{n\in\mathbb{N}}$ and $\{G^{(n)}_{j}\}_{n\in\mathbb{N}}$  of operators in $B_{0}(\mathcal{H})$ for all $i\in[J]$ such that
\[\left(\|D^{(n)}_{j}-P_{m}\|, \|G^{(n)}_{j}-P_{m}\|, \|W_{j+t_{n}}W_{j+t_{n}-1}...W_{j+1}D^{(n)}_{j}\|, \|W^{-1}_{j-t_{n}+1}W^{-1}_{j-t_{n}+2}...W^{-1}_{j}G^{(n)}_{j}\|\right)\]
\[\stackrel{\mathcal F}{\rightrightarrows} (0,0,0,0), \textrm{ as }  n\rightarrow\infty, \textrm{ for } j\in [J].\]
\end{proposition}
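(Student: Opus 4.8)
The plan is to establish $(1)\Leftrightarrow(2)$ by transporting the scheme already used for Theorem~\ref{the5.1} and Proposition~\ref{pro4.1} to the $\mathcal{F}$-setting: ordinary sequential limits become uniform $\mathcal{F}$-convergence, and the clause ``$N(U,V)$ is infinite'' becomes ``$N(U,V)\in\mathcal{F}$''. Throughout I write $M:=\sup_j\|W_j\|<\infty$, use the coordinatewise estimate $\|z\|\le\sum_i\|z_i\|$ valid in $\ell_2(\mathcal{A})$, the identity $T^{t}_{U,W}S^{t}_{U,W}=\mathrm{id}$ (from invertibility of $T_{U,W}$), and the open balls $B(x,\epsilon):=\{z\in\ell_2(\mathcal{A}):\|z-x\|<\epsilon\}$.

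For $(2)\Rightarrow(1)$, given nonempty open $U,V$ I would use density of $F$ to pick $x\in U\cap F$ and $y\in V\cap F$ lying in a common $F_{J,m}$, and apply $(2)$ to this $J,m$. With $u^{(n)}_j:=D^{(n)}_jx_j$ and $v^{(n)}_j:=G^{(n)}_jy_j$ on $[J]$ (zero off $[J]$), set $\varphi_n:=u^{(n)}+S^{t_n}_{U,W}v^{(n)}$, so that $T^{t_n}_{U,W}\varphi_n=T^{t_n}_{U,W}u^{(n)}+v^{(n)}$. The coordinate computations of Theorem~\ref{the5.1} then bound $\|\varphi_n-x\|$ and $\|T^{t_n}_{U,W}\varphi_n-y\|$ each by a constant $C(x,y,J)$ times the maximum over $j\in[J]$ of the four quantities appearing in $(2)$; crucially these bounds use only triangle inequalities and $T^{t_n}S^{t_n}=\mathrm{id}$, so they require \emph{no} lower bound on $t_n$. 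By uniform $\mathcal{F}$-convergence, for each $\epsilon>0$ the set $F_\epsilon:=\{n:\text{all four quantities are}<\epsilon\ \forall j\in[J]\}\in\mathcal{F}$; choosing $\epsilon$ small enough that $C\epsilon$ lands inside $U$ and $V$ gives $F_\epsilon\subseteq N(U,V)$, hence $N(U,V)\in\mathcal{F}$ by upward heredity. This direction is clean.

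For $(1)\Rightarrow(2)$, fix $J,m$ and the test vector $x$ with $x_j=P_m$ for $j\in[J]$ and $0$ elsewhere. For each $n$ put $\rho_n:=\inf_u\max(\|u-x\|,\|T^{t_n}_{U,W}u-x\|)$, pick a near-minimiser $u^{(n)}$, and define $D^{(n)}_j:=u^{(n)}_jP_m$ and $G^{(n)}_j:=(T^{t_n}_{U,W}u^{(n)})_jP_m$. Always $\|D^{(n)}_j-P_m\|\le\|u^{(n)}-x\|$ and $\|G^{(n)}_j-P_m\|\le\|T^{t_n}_{U,W}u^{(n)}-x\|$, while the shift relation yields $\|W_{j+t_n}\cdots W_{j+1}D^{(n)}_j\|\le\|(T^{t_n}_{U,W}u^{(n)})_{j+t_n}\|$ and $\|W^{-1}_{j-t_n+1}\cdots W^{-1}_jG^{(n)}_j\|\le\|u^{(n)}_{j-t_n}\|$ after telescoping. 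When $t_n>2J$ the indices $j\pm t_n$ leave $[J]$, so $x_{j\pm t_n}=0$ and both right-hand sides are $\le\rho_n$; thus for such $n$ all four quantities are $\le\rho_n$. Since $\rho_n<\epsilon$ is exactly $n\in N_\epsilon:=N(B(x,\epsilon),B(x,\epsilon))$, $\mathcal{F}$-transitivity gives $N_\epsilon\in\mathcal{F}$, and the goal is to deduce $\{n:\text{four quantities}<\epsilon\}\supseteq N_\epsilon\cap\{t_n>2J\}\in\mathcal{F}$.

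I expect the main obstacle to be precisely the small-shift indices $t_n\le 2J$, where the support $[J]$ and its image $[J]+t_n$ overlap and the forward/backward products need not be small. Two observations tame most of this. First, the test vector is non-periodic: for $1\le t\le 2J$ the top coordinate gives $(T^{t}_{U,W}x)_{J+t}=W_{J+t}\cdots W_{J+1}P_mU^{t}\ne 0=x_{J+t}$, since the $W_j$ are invertible and $U$ is unitary, so $c_t:=\|T^{t}_{U,W}x-x\|>0$; with $\delta:=\min_{1\le t\le 2J}c_t>0$ and $\|T^{t}_{U,W}\|\le M^{2J}$ for $t\le 2J$, any $n\in N_\epsilon$ with $1\le t_n\le 2J$ would force $c_{t_n}\le(M^{2J}+1)\epsilon<\delta$, impossible once $\epsilon$ is small. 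Second, indices with $t_n=0$ have empty forward product $\|D^{(n)}_j\|\approx\|P_m\|=1$, so they never enter $\{$four quantities $<\epsilon\}$ for $\epsilon<\tfrac12$. Hence for small $\epsilon$ one has $\{n:\text{four quantities}<\epsilon\}\supseteq N_\epsilon\setminus\{n:t_n=0\}$, and the residual task is to keep this set inside $\mathcal{F}$. This coupling of the index $n$, the shift amount $t_n$, and membership in $\mathcal{F}$ is the delicate point: when $t_n\to\infty$ — in particular for the principal case $t_n=n$ of the powers of $T_{U,W}$ — the sets $\{t_n\le 2J\}$ and $\{t_n=0\}$ are finite, so $N_\epsilon\setminus\{t_n=0\}\supseteq N_\epsilon\setminus[0,p]\in\mathcal{F}$ by finite invariance and the argument closes; for a fully general unbounded $(t_n)$ one must instead argue directly that $N_\epsilon\cap\{t_n>2J\}\in\mathcal{F}$, and this is the step I would write out with the greatest care.
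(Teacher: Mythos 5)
Your proposal follows essentially the same route as the paper's proof: the same test vector $x$ with $x_j=P_m$ on $[J]$ and $0$ elsewhere, the same choices $D^{(n)}_j:=y^{(n)}_j$ and $G^{(n)}_j:=\bigl(T^{t_n}_{U,W}y^{(n)}\bigr)_j$ (up to a harmless factor $P_m$) for $(1)\Rightarrow(2)$, and the same perturbation $\eta_n=u_n+S^{t_n}_{U,W}v_n$ for $(2)\Rightarrow(1)$. Your version of $(2)\Rightarrow(1)$ is in fact slightly cleaner: bounding $\|\eta_n-x\|$ and $\|T^{t_n}_{U,W}\eta_n-y\|$ by sums of the four quantities via the triangle inequality indeed needs no lower bound on $t_n$, whereas the paper's coordinatewise case split over $[J]$, $[J]\pm t_n$ and their complement tacitly assumes these index sets are disjoint.

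The one genuine issue is the point you yourself flag in $(1)\Rightarrow(2)$. The paper disposes of it by writing that, by finite invariance, ``we may assume that for every $n\in F_\varepsilon$, $n>2J$''; but the estimates $\|W_{j+t_n}\cdots W_{j+1}D^{(n)}_j\|\le\|T^{t_n}_{U,W}y^{(n)}-x\|$ and $\|W^{-1}_{j-t_n+1}\cdots W^{-1}_jG^{(n)}_j\|\le\|y^{(n)}-x\|$ require $t_n>2J$, not $n>2J$, and finite invariance only removes an initial segment of the index set $\{0,1,\dots,p\}$, not the set $\{n: t_n\le 2J\}$. Your non-periodicity argument ($c_t:=\|T^t_{U,W}x-x\|>0$ for $1\le t\le 2J$, so such $n$ are expelled from $N_\varepsilon$ once $\varepsilon<\delta/(M^{2J}+1)$) genuinely repairs part of this and goes beyond what the paper does; but, as you note, the indices with $t_n=0$ always lie in $N_\varepsilon=N(B(x,\varepsilon),B(x,\varepsilon))$ and a general Furstenberg family gives no license to delete them. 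So for $t_n\to\infty$ (in particular the principal case $t_n=n$) both your argument and the paper's close, while for a merely unbounded, non-monotone $(t_n)$ neither is complete as written; the proposition should be read with $(t_n)$ eventually exceeding any bound off a finite set, or the necessity direction rerun with a second ball $V$ chosen disjoint from $T^t_{U,W}(B(x,\varepsilon))$ for all $0\le t\le 2J$. Identifying and partially patching this step is the main added value of your write-up over the paper's.
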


\begin{proof}
First, we show that $(1)\Rightarrow(2)$. Assume that $J,m\in \mathbb{N}$, and define $x=(x_{j})_{j}\in \ell_{2}(\mathcal{A})$ by $x_{j}:=P_{m}$ for all $j\in [J]$, and $x_{j}:=0$ for all $j\in \mathbb{Z}\setminus[J]$. Then there exists a sequence $\{y^{(n)}\}_{n}\subset \ell_{2}(\mathcal{A})$ such that
\[\left(y^{(n)},T^{t_{n}}_{U,W}(y^{(n)})\right)\stackrel{\mathcal F}{\longrightarrow}(x,x),\]
as $n\rightarrow\infty$, i.e. for every $\varepsilon>0$, there exists a set $F_{\varepsilon}\in\mathcal{F}$ such that $\|T^{t_{n}}_{U,W}(y^{(n)})-x\|<\varepsilon$ and $\|y^{(n)}-x\|<\varepsilon$ for any $n\in F_{\varepsilon}$.

Since $\mathcal{F}$ is finitely invariant, we may assume that for every $n\in F_{\varepsilon}$, $n>2J$.
Set
\[D^{(n)}_{j}:=y^{(n)}_{j}, G^{(n)}_{j}:=W_{j}W_{j-1}...W_{j-t_{n}+1}y^{(n)}_{j-t_{n}}U^{t_{n}}\]
for all $j\in[J]$.

Since $B_{0}(\mathcal{H})$ is an ideal of $B(\mathcal{H})$, we have  $\{D^{(n)}_{j}\}_{n\in\mathbb{N}}\subset B_{0}(\mathcal{H})$ and $\{G^{(n)}_{j}\}_{n\in\mathbb{N}}\subset B_{0}(\mathcal{H})$ for every $j\in[J]$.

And we get for any $n\in F_{\varepsilon}$,

\begin{eqnarray}\begin{split}\nonumber
\|D^{(n)}_{j}-P_{m}\|&=\|y^{(n)}_{j}-P_{m}\|\leq\|y^{(n)}-x\|<\varepsilon,\\
\|G^{(n)}_{j}-P_{m}\|&=\left\|\left(T^{t_{n}}_{U,W}(y^{(n)})\right)_{j}-P_{m}\right\|\\
&\leq\|T^{t_{n}}_{U,W}(y^{(n)})-x\|<\varepsilon,\\
\|W_{j+t_{n}}W_{j+t_{n}-1}...W_{j+1}D^{(n)}_{j}\|&=\left\|\left(T^{t_{n}}_{U,W}(y^{(n)})\right)_{j+t_{n}}U^{-t_{n}}\right\|\leq \|T^{t_{n}}_{U,W}(y^{(n)})-x\|<\varepsilon,\\
\|W^{-1}_{j-t_{n}+1}W^{-1}_{j-t_{n}+2}...W^{-1}_{j}G^{(n)}_{j}\|&=\|y^{(n)}_{j-t_{n}}U^{t_{n}}\|=\|y^{(n)}_{j-t_{n}}\|
\leq\|y^{(n)}-x\|<\varepsilon,
\end{split}\end{eqnarray}
which concludes the proof.

Now prove $(2)\Rightarrow(1)$. Let $V_{1}, V_{2}$ be non-empty open subsets of $\ell_{2}(\mathcal{A})$.
Assume that $F$ denotes the set of all elements $x=(x_{j})_{j}\in \ell_{2}(\mathcal{A})$ such that for some $J,m\in \mathbb{N}$, $x_{j}=P_{m}x_{j}$ for all $j\in [J]$ and $x_{j}=0$ for all $j\in \mathbb{Z}\setminus [J]$.
Since $F$ is dense in $\ell_{2}(\mathcal{A})$ by Proposition $2.2.1$ in $\cite{MT05}$, we can find $x=(x_{j})_{j}\in V_{1}\cap F$ and $y=(y_{j})_{j}\in V_{2}\cap F$. Then there exist $J,m\in\mathbb{N}$ such that $x_{j}=y_{j}=0$ for all $j\in\mathbb{Z}\setminus[J]$ and $x_{j}=P_{m}x_{j}$, $y_{j}=P_{m}y_{j}$ for all $j\in[J]$. Let $\|x\|=a$, $\|x\|=b$. Choose the sequences $\{D^{(n)}_{j}\}_{n\in\mathbb{N}}$ and $\{G^{(n)}_{j}\}_{n\in\mathbb{N}}$ for $j\in[J]$ satisfying $(2)$. Let us define the sequences $\{u_{n}\}_{n},\{v_{n}\}_{n}\subset\ell_{2}{(\mathcal{A})}$ by $(u_{n})_{j}:=D^{(n)}_{j}x_{j}, (v_{n})_{j}:=G^{(n)}_{j}y_{j}$ for $j\in [J]$ and $(u_{n})_{j}:=0, (v_{n})_{j}:=0$ for $j\in\mathbb{Z}\setminus[J]$. Set
\[\eta_{n}:=u_{n}+S^{t_{n}}_{U,W}v_{n}.\]

It is sufficient to prove that
\[\left(\eta_{n}, T^{t_{n}}_{U,W}(\eta_{n})\right)\stackrel{\mathcal F}{\longrightarrow}(x,y) \textrm{ as } n\rightarrow\infty.\]

From the condition $(2)$, for $\varepsilon>0$ there exists a set $F_{\varepsilon}\in\mathcal{F}$ such that for $n\in F_{\varepsilon}$ and $j\in[J]$, $\|D^{(n)}_{j}-P_{m}\|<\varepsilon/2(2J+1)a$, $\|G^{(n)}_{j}-P_{m}\|<\varepsilon/2(2J+1)b$, $\|W_{j+t_{n}}W_{j+t_{n}-1}...W_{j+1}D^{(n)}_{j}\|<\varepsilon/2(2J+1)a$ and $\|W^{-1}_{j-t_{n}+1}W^{-1}_{j-t_{n}+2}...W^{-1}_{j}G^{(n)}_{j}\|<\varepsilon/2(2J+1)b$.

If $n\in F_{\varepsilon}$ and $j\in [J]$, then we have that
\begin{eqnarray}\begin{split}\nonumber
\|(\eta_{n})_{j}-x_{j}\|&=\|(u_{n})_{j}+(S^{t_{n}}_{U,W}v_{n})_{j}-x_{j}\|\\
&=\|(u_{n})_{j}-x_{j}\|=\|D^{(n)}_{j}x_{j}-P_{m}x_{j}\|\\
&\leq\|D^{(n)}_{j}-P_{m}\|\|x\|<\varepsilon/2(2J+1).
\end{split}\end{eqnarray}
If $n\in F_{\varepsilon}$ and $j\in [J]-t_{n}$, then we have that
\begin{eqnarray}\begin{split}\nonumber
\|(\eta_{n})_{j}-x_{j}\|&=\|(u_{n})_{j}+(S^{t_{n}}_{U,W}v_{n})_{j}-x_{j}\|\\
&=\|(S^{t_{n}}_{U,W}v_{n})_{j}\|=\|W^{-1}_{j+1}W^{-1}_{j+2}...W^{-1}_{j+t_{n}}G^{(n)}_{j+t_{n}}y_{j+t_{n}}U^{-t_{n}}\|\\
&\leq \|W^{-1}_{j+1}W^{-1}_{j+2}...W^{-1}_{j+t_{n}}G^{(n)}_{j+t_{n}}\|\|y\|<\varepsilon/2(2J+1).
\end{split}\end{eqnarray}
If $n\in F_{\varepsilon}$ and $j\notin [J], [J]-t_{n}$, then we have that
\begin{eqnarray}\begin{split}\nonumber
\|(\eta_{n})_{j}-x_{j}\|&=\|(u_{n})_{j}+(S^{t_{n}}_{U,W}v_{n})_{j}-x_{j}\|\\
&=0.
\end{split}\end{eqnarray}
Therefore it follows that
\[\|\eta_{n}-x\|\leq\sum_{j\in\mathbb{N}}\|(\eta_{n})_{j}-x_{j}\|<\varepsilon\]
for all $n\in F_{\varepsilon}$.

Next, if $n\in F_{\varepsilon}$ and $j\in [J]$,then  we have that
\begin{eqnarray}\begin{split}\nonumber
\|(T^{t_{n}}_{U,W}\eta_{n})_{j}-y_{j}\|&=\|W_{j}W_{j-1}...W_{j-t_{n}+1}(u_{n})_{j-t_{n}}U^{t_{n}}+(v_{n})_{j}-y_{j}\|\\
&=\|(v_{n})_{j}-y_{j}\|=\|G^{(n)}_{j}y_{j}-P_{m}y_{j}\|\\
&\leq\|G^{(n)}_{j}-P_{m}\|\|y\|<\varepsilon/2(2J+1).
\end{split}\end{eqnarray}
If $n\in F_{\varepsilon}$ and $j\in [J]+t_{n}$, then we have that
\begin{eqnarray}\begin{split}\nonumber
\|(T^{t_{n}}_{U,W}\eta_{n})_{j}-y_{j}\|&=\|W_{j}W_{j-1}...W_{j-t_{n}+1}(u_{n})_{j-t_{n}}U^{t_{n}}+(v_{n})_{j}-y_{j}\|\\
&=\|W_{j}W_{j-1}...W_{j-t_{n}+1}(u_{n})_{j-t_{n}}U^{t_{n}}\|=\|W_{j}W_{j-1}...W_{j-t_{n}+1}D^{(n)}_{j-t_{n}}x_{j-t_{n}} U^{t_{n}}\|\\
&\leq\|W_{j}W_{j-1}...W_{j-t_{n}+1}D^{(n)}_{j-t_{n}}\|\|x\|<\varepsilon/2(2J+1).
\end{split}\end{eqnarray}
If $n\in F_{\varepsilon}$ and $j\notin [J], [J]+t_{n}$, then we have that
\begin{eqnarray}\begin{split}\nonumber
\|(T^{t_{n}}_{U,W}\eta_{n})_{j}-y_{j}\|&=\|W_{j}W_{j-1}...W_{j-t_{n}+1}(u_{n})_{j-t_{n}}U^{t_{n}}+(v_{n})_{j}-y_{j}\|\\
&=0.
\end{split}\end{eqnarray}

Therefore we get that
\[\|T^{t_{n}}_{U,W}\eta_{n}-y\|\leq\sum_{j\in\mathbb{N}}\|(T^{t_{n}}_{U,W}\eta_{n})_{j}-y_{j}\|<\varepsilon\]
for all $n\in F_{\varepsilon}$.
We complete the proof.

\end{proof}

$\mathcal{F}$-transitivity for weighted shifts on $\ell^{p}(\mathbb{Z})$ is already investigated in \cite{BMPP19} and Proposition \ref{pro3.1} generalizes the result of $\mathcal{F}$-transitivity for weighted shifts on $\ell^{p}$.

Next, we give a sufficient condition for $\mathcal{F}$-transitivity of weighted shift operator $T_{U, W}$ on $\ell_{2}(\mathcal{A})$. This extends the result of Theorem 3.2 in \cite{I24} to $\mathcal{F}$-transitivity.

\begin{theorem}\label{the3.2}
Let $\mathcal{F}$ be a finitely invariant Furstenberg family. And let $B_{\mathcal{H}}$ be the unit ball of $\mathcal{H}$ and $(t_{n})_{n}$ be an unbounded sequence of nonnegative integers. We denote $T_{U,W,n}:=T^{t_{n}}_{U,W}$ for all $n\in \mathbb{N}$. Suppose that for every $J\in\mathbb{N}$, there exist dense subsets $H^{(1)}_{j}$ and $H^{(2)}_{j}$ of $B_{\mathcal{H}}$ for all $j\in[J]$ such that
\[(W_{j+t_{n}}W_{j+t_{n}-1}...W_{j+1}, W^{-1}_{j-t_{n}+1}W^{-1}_{j-t_{n}+2}...W^{-1}_{j})\stackrel{\mathcal F}{\rightrightarrows}(0,0), \textrm{ as } n\rightarrow \infty,\]
\[ \textrm{ pointwise on } H^{(1)}_{j}\times H^{(2)}_{j}, \textrm{ for } j\in[J],\]
 that is, for any $\varepsilon>0$, there exists a set $F_{\varepsilon}\in\mathcal{F}$ such that for any $n\in F_{\varepsilon}$, $j\in [J]$ and $(x,y)\in H^{(1)}_{j}\times H^{(2)}_{j}$,
$\|W_{j+t_{n}}W_{j+t_{n}-1}...W_{j+1}x\|<\varepsilon, \|W^{-1}_{j-t_{n}+1}W^{-1}_{j-t_{n}+2}...W^{-1}_{j}y\|<\varepsilon$.
Then $(T_{U,W,n})_{n}$ is $\mathcal{F}$-transitive on $\ell_{2}(\mathcal{A})$.
\end{theorem}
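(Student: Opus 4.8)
The plan is to deduce Theorem~\ref{the3.2} from Proposition~\ref{pro3.1} by constructing, for each $J,m\in\mathbb{N}$, the operator sequences $\{D^{(n)}_j\}_n$ and $\{G^{(n)}_j\}_n$ required by condition~$(2)$ of the proposition. The hypothesis gives us dense subsets $H^{(1)}_j, H^{(2)}_j$ of the unit ball $B_{\mathcal H}$ on which the products $W_{j+t_n}\cdots W_{j+1}$ and $W^{-1}_{j-t_n+1}\cdots W^{-1}_{j}$ converge uniformly (in the Furstenberg sense) to $0$. The idea is to build finite-rank operators close to $P_m$ out of vectors drawn from these dense sets, so that applying the weight products to them yields small norms along a set in $\mathcal F$.

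First I would fix $J,m$ and recall that $P_m$ is the orthogonal projection onto $L_m=\mathrm{span}\{e_{-m},\dots,e_m\}$, so $P_m=\sum_{i=-m}^{m}\langle\,\cdot\,,e_i\rangle e_i$. For each basis vector $e_i\in L_m$ and each $j\in[J]$, I would use density of $H^{(1)}_j$ in $B_{\mathcal H}$ to select vectors $u^{(n)}_{i,j}\in H^{(1)}_j$ with $\|u^{(n)}_{i,j}-e_i\|\to 0$, and likewise $v^{(n)}_{i,j}\in H^{(2)}_j$ approximating $e_i$ from $H^{(2)}_j$. Then define
\[
D^{(n)}_j := \sum_{i=-m}^{m}\langle\,\cdot\,,e_i\rangle\,u^{(n)}_{i,j},\qquad
G^{(n)}_j := \sum_{i=-m}^{m}\langle\,\cdot\,,e_i\rangle\,v^{(n)}_{i,j}.
\]
These are finite-rank, hence compact, operators in $B_0(\mathcal H)$, and $\|D^{(n)}_j-P_m\|$ and $\|G^{(n)}_j-P_m\|$ are controlled by $\max_i\|u^{(n)}_{i,j}-e_i\|$ and $\max_i\|v^{(n)}_{i,j}-e_i\|$ respectively, which we can drive to $0$ as $n\to\infty$ in the ordinary sense.

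The crux is to verify the remaining two uniform $\mathcal F$-convergences, namely that $\|W_{j+t_n}\cdots W_{j+1}D^{(n)}_j\|$ and $\|W^{-1}_{j-t_n+1}\cdots W^{-1}_j G^{(n)}_j\|$ uniformly $\mathcal F$-converge to $0$. Here I would estimate
\[
\|W_{j+t_n}\cdots W_{j+1}D^{(n)}_j\|
\le \sum_{i=-m}^{m}\|W_{j+t_n}\cdots W_{j+1}\,u^{(n)}_{i,j}\|,
\]
and since each $u^{(n)}_{i,j}\in H^{(1)}_j$, the pointwise uniform $\mathcal F$-convergence hypothesis supplies, for any $\varepsilon>0$, a single set $F_\varepsilon\in\mathcal F$ on which every summand is below $\varepsilon/(2m+1)$ simultaneously, giving the bound $\varepsilon$ on $F_\varepsilon$. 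The symmetric argument handles the $G$-term. The main obstacle is organizing the quantifiers so that one common set $F\in\mathcal F$ works across all finitely many indices $i\in\{-m,\dots,m\}$ and $j\in[J]$ and across all four coordinates of the tuple in condition~$(2)$; since $\mathcal F$ is a Furstenberg family (hereditary upward) this is delicate, but it follows because the hypothesis already furnishes a \emph{single} $F_\varepsilon$ valid for all $j\in[J]$ and all $(x,y)\in H^{(1)}_j\times H^{(2)}_j$, and the $D,G$ approximation terms converge in the ordinary (cofinal) sense. I would combine these by taking, for a given $\varepsilon$, the intersection-free upward-closed set obtained from the hypothesis and discarding an initial segment if needed, invoking finite invariance of $\mathcal F$ to absorb the finitely many early indices where the ordinary-convergence terms are still large. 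Once the four-coordinate tuple is shown to satisfy $\stackrel{\mathcal F}{\rightrightarrows}(0,0,0,0)$, condition~$(2)$ of Proposition~\ref{pro3.1} holds, and $\mathcal F$-transitivity of $(T_{U,W,n})_n$ follows immediately.
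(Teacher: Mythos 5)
Your proposal is correct and follows essentially the same route as the paper: the paper likewise builds finite-rank operators $D^{(n)}_j$ and $G^{(n)}_j$ by sending each $e_l$, $l\in[m]$, to approximating vectors chosen from $H^{(1)}_j$ and $H^{(2)}_j$, bounds $\|W_{j+t_n}\cdots W_{j+1}D^{(n)}_j\|$ by the sum of the $2m+1$ pointwise terms each controlled by $\varepsilon/(2m+1)$ on the common set $F_{\varepsilon/(2m+1)}\in\mathcal{F}$, and concludes via condition $(2)$ of Proposition~\ref{pro3.1}. Your explicit handling of the quantifier bookkeeping (using finite invariance to discard the initial segment where the ordinary-convergence terms $\|D^{(n)}_j-P_m\|$ are still large) is in fact slightly more careful than the paper's, which leaves that step implicit.
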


\begin{proof}
Assume that $J,m\in \mathbb{N}$. Since for each $j\in\mathbb [J]$, $H^{(1)}_{j}$ and $H^{(2)}_{j}$ are dense in $B_{\mathcal{H}}$, we can find sequences $(f^{j}_{i,l})_{i}\subset H^{(1)}_{j}$  and $(g^{j}_{i,l})_{i}\subset H^{(2)}_{j}$ such that $f^{j}_{i,l}\rightarrow e_{l}$ and $g^{j}_{i,l}\rightarrow e_{l}$ as $i\rightarrow \infty$ for all $j\in[J]$ and $l\in [m]$.
For each $j\in[J]$ define the operators $D^{(i)}_{j}$ and $G^{(i)}_{j}$ by
\begin{displaymath}
D^{(i)}_{j}(e_{l}):=\left\{ \begin{array}{ll}
 f^{(j)}_{i,l}, &  l\in[m]\\
 0, & l \notin [m]
\end{array} \right.
\textrm{ and     }
G^{(i)}_{j}(e_{l}):=\left\{ \begin{array}{ll}
 g^{(j)}_{i,l}, &  l\in[m]\\
 0, & l \notin [m].
\end{array} \right.
\end{displaymath}

Then trivially  $\|D^{(i)}_{j}-P_{m}\|\rightarrow 0$ and $\|G^{(i)}_{j}-P_{m}\|\rightarrow 0$, as $i\rightarrow \infty$ for any $j\in[m]$.
And for any $n\in F_{\varepsilon/(2m+1)}$, we have that

\begin{eqnarray}\begin{split}\nonumber
\|W_{j+t_{n}}W_{j+t_{n}-1}...W_{j+1}D^{(n)}_{j}\|&\leq \sum_{l\in[m]}\|W_{j+t_{n}}W_{j+t_{n}-1}...W_{j+1}D^{(n)}_{j}e_{l}\| \\ &=\sum_{l\in[m]}\|W_{j+t_{n}}W_{j+t_{n}-1}...W_{j+1}f^{(j)}_{n,l}\|<\varepsilon,\\
 \|W^{-1}_{j-t_{n}+1}W^{-1}_{j-t_{n}+2}...W^{-1}_{j}G^{(n)}_{j}\|&\leq \sum_{l\in[m]}\|W^{-1}_{j-t_{n}+1}W^{-1}_{j-t_{n}+2}...W^{-1}_{j}G^{(n)}_{j}e_{l}\|\\
&=\sum_{l\in[m]}\|W^{-1}_{j-t_{n}+1}W^{-1}_{j-t_{n}+2}...W^{-1}_{j}g^{(j)}_{n,l}\|<\varepsilon
\end{split}\end{eqnarray}
which concludes the proof.
\end{proof}

\begin{corollary}\label{co3.3}
Let $\mathcal{F}$ be a finitely invariant Furstenberg family. And let $(t_{n})_{n}$ be an unbounded sequence of nonnegative integers. We denote $T_{U,W,n}:=T^{t_{n}}_{U,W}$ for all $n\in \mathbb{N}$.

Suppose that for every $J, m\in\mathbb{N}$,
\[\left(\|W_{j+t_{n}}W_{j+t_{n}-1}...W_{j+1}P_{m}\|, \|W^{-1}_{j-t_{n}+1}W^{-1}_{j-t_{n}+2}...W^{-1}_{j}P_{m}\|\right)\stackrel{\mathcal F}{\rightrightarrows}(0, 0), \textrm{ as } n\rightarrow \infty, \textrm{ for } j\in[J].\]
Then $T_{U,W}$ is $\mathcal{F}$-transitive.
\end{corollary}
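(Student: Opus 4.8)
The plan is to obtain this corollary as a one-line specialization of Proposition~\ref{pro3.1}, rather than routing through Theorem~\ref{the3.2}; the point is that the hypothesis stated here is exactly condition~$(2)$ of Proposition~\ref{pro3.1} for the constant choice of approximating operators $D^{(n)}_{j}=G^{(n)}_{j}=P_{m}$.

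Concretely, I would fix $J,m\in\mathbb{N}$ and set $D^{(n)}_{j}:=P_{m}$ and $G^{(n)}_{j}:=P_{m}$ for every $n\in\mathbb{N}$ and $j\in[J]$. Each $P_{m}$ is a finite-rank, hence compact, operator, so these are admissible sequences in $B_{0}(\mathcal{H})$ as required by Proposition~\ref{pro3.1}. With this choice the first two coordinates of the four-tuple in condition~$(2)$ vanish identically, since $\|D^{(n)}_{j}-P_{m}\|=\|G^{(n)}_{j}-P_{m}\|=0$, while the last two coordinates become precisely $\|W_{j+t_{n}}\ldots W_{j+1}P_{m}\|$ and $\|W^{-1}_{j-t_{n}+1}\ldots W^{-1}_{j}P_{m}\|$.

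It then remains to check that the full four-tuple uniformly $\mathcal{F}$-converges to $(0,0,0,0)$. Because $[J]$ is finite and the two identically-zero coordinates lie below any $\varepsilon>0$ on all of $\mathbb{N}_{0}$, a single witnessing set $F\in\mathcal{F}$ for the four-tuple is obtained exactly from a witnessing set for the pair $(\|W_{j+t_{n}}\ldots W_{j+1}P_{m}\|,\|W^{-1}_{j-t_{n}+1}\ldots W^{-1}_{j}P_{m}\|)$, which the assumed uniform $\mathcal{F}$-convergence $\stackrel{\mathcal F}{\rightrightarrows}(0,0)$ supplies. Hence condition~$(2)$ of Proposition~\ref{pro3.1} is verified, and that proposition yields that $(T_{U,W,n})_{n}$ is $\mathcal{F}$-transitive; taking $t_{n}=n$ recovers the stated conclusion that $T_{U,W}$ is $\mathcal{F}$-transitive. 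I expect no genuine obstacle here, as the argument is a direct substitution; the only care needed is the elementary bookkeeping in this last step, namely that appending identically-zero coordinates cannot strengthen a uniform $\mathcal{F}$-convergence requirement over the finite index set $[J]$.
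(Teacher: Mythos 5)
Your proposal is correct: taking $D^{(n)}_{j}=G^{(n)}_{j}=P_{m}$ (finite-rank, hence in $B_{0}(\mathcal{H})$) makes the first two coordinates of the four-tuple in condition $(2)$ of Proposition~\ref{pro3.1} vanish identically and turns the last two into exactly the quantities hypothesized in Corollary~\ref{co3.3}, so the uniform $\mathcal{F}$-convergence over the finite index set $[J]$ transfers verbatim and Proposition~\ref{pro3.1} applies. The paper gives no written proof and, by its placement, appears to intend the corollary as a consequence of Theorem~\ref{the3.2}; your route through Proposition~\ref{pro3.1} is genuinely different and, in fact, the safer one. A literal application of Theorem~\ref{the3.2} would require dense subsets $H^{(1)}_{j},H^{(2)}_{j}$ of the whole unit ball $B_{\mathcal{H}}$ together with a single set $F_{\varepsilon}\in\mathcal{F}$ controlling all points of those dense sets simultaneously; the corollary's hypothesis is quantified separately for each $m$, so one would need to intersect the sets $F_{\varepsilon,m}$ over all $m$, and such an intersection need not lie in $\mathcal{F}$. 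Your direct specialization sidesteps this uniformity-in-$m$ issue entirely, at no cost in length. The only cosmetic point is the mismatch you already flagged: the corollary introduces a general sequence $(t_{n})_{n}$ but states its conclusion for $T_{U,W}$ itself, which your argument covers either by reading the conclusion as ``$(T_{U,W,n})_{n}$ is $\mathcal{F}$-transitive'' or by taking $t_{n}=n$.
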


Using the above corollary, we can find an example of a topologically mixing operator $T_{U,W}$ as follows;

\begin{example}\label{ex3.4}
Let $\mathcal{H}$ be a separable Hilbert space and $(e_{j})_{j\in\mathbb{Z}}\subset\mathcal{H}$ be an orthonormal basis for $\mathcal{H}$. Let $\alpha>1$ be a positive real number. And $V$ is a bounded linear operator on $\mathcal{H}$ defined by
\begin{displaymath}
V(e_{n})=\left\{ \begin{array}{ll}
 \alpha e_{n+1}, & \textrm{for } n<0\\
 \frac{1}{\alpha}e_{n+1}, & \textrm{for } n\geq 0.
\end{array} \right.
\end{displaymath}

We define $W_{j}=V$ for every $j\in \mathbb{Z}$. Then since
\[\lim_{k\rightarrow\infty}\|W_{j+k}W_{j+k-1},...,W_{j+1}P_{m}\|= \lim_{k\rightarrow\infty}\|V^{k}P_{m}\|=0\]
and
\[\lim_{k\rightarrow\infty}\|W^{-1}_{j-k+1}W^{-1}_{j-k+2}...W^{-1}_{j}P_{m}\|= \lim_{k\rightarrow\infty}\|V^{-k}P_{m}\|=0,\]
$T_{U,W}$ is topologically mixing from Corollary \ref{co3.3}
\end{example}


\begin{thebibliography}{99}

\bibitem{AK21}
\newblock M. Amouch, N. Karim,
\newblock Strong transitivity of composition operators,
\newblock  Acta Math. Hungar., 164, 458-469 (2021)

\bibitem{BG18}
\newblock A. Bonilla, K. G. Grosse-Erdmann,
\newblock  Upper frequent hypercyclicity and related notions,
\newblock Rev. Mat. Complut., 31, 673-711 (2018)



\bibitem{BR15}
\newblock F. Bayart, I. Ruzsa,
\newblock Difference sets and frequently hypercyclic weighted shifts.,
\newblock Ergodic Theory Dynam. Systems., \textbf{35}, 691-709 (2015)

\bibitem{B07}
\newblock L. Bernal-Gonz\'{a}lez,
\newblock Disjoint hypercyclic operators,
\newblock Studia Math., \textbf{182} (2), 113-130 (2007)

\bibitem{BMPP16}
\newblock J. B\`{e}s, Q. Menet, A. Peris, Y. Puig,
\newblock Recurrence properties of hypercyclic operators,
\newblock Math. Ann., \textbf{366}, 545-572 (2016)

\bibitem{BMPP19}
\newblock J. B\`{e}s, Q. Menet, A. Peris, Y. Puig,
\newblock Strong transitivity properties for operators,
\newblock J. Diff. Equat., \textbf{266}, 1313-1337 (2019)


\bibitem{BMS14}
\newblock J. B\`{e}s, \"{O}. Martin and R. Sanders,
\newblock Weighted shifts and disjoint hypercyclicity,
\newblock  J. Operator Theory, \textbf{72} (1), 15-40 (2014)

\bibitem{BP07}
\newblock J. B\`{e}s and A. Peris,
\newblock Disjointness in hypercyclicity,
\newblock J. Math. Anal. Appl., \textbf{336}, 297-315 (2007)


\bibitem{CS04}
\newblock G. Costakis and M. Sambarino,
\newblock Topologically mixing hypercyclic operators,
\newblock Proc. Amer. Math. Soc., \textbf{132}, 385-389 (2004)

\bibitem{G00}
\newblock K.-G. Grosse-Erdmann,
\newblock Hypercyclic and chaotic weighted shifts,
\newblock Studia Math., \textbf{139} (1), 47-68 (2000)

\bibitem{GM11}
\newblock K.-G. Grosse-Erdmann, A.P. Manguillot,
\newblock \emph{Linear Chaos},
\newblock Universitext, Springer, London (2011)

\bibitem{HH18}
\newblock S. He, Y. Huang,
\newblock JF-Class Weighted Backward Shifts,
\newblock Int. J. Bifurcat. Chaos 28(6), 1850076(11 pages) (2018)


\bibitem{IT23dis}
\newblock S. Ivkovi\'{c} and S. M. Tabatabaie,
\newblock Disjoint Linear Dynamical Properties of Elementary operators,
\newblock Bull. Iran. Math. Soc., 49, 63 (2023)

\bibitem{IT21}
\newblock S. Ivkovi\'{c} and S. M. Tabatabaie,
\newblock Hypercyclic translation operators on the algebra of compact operators,
\newblock Iran J. Sci. Technol. Trans. Sci., 45, 1765-1775 (2021)

\bibitem{IT23}
\newblock S. Ivkovi\'{c} and S. M. Tabatabaie,
\newblock Hypercyclic Generalized Shift Operators,
\newblock Complex Anal. Oper. Theory., 17, 60 (2023)

\bibitem{I24}
\newblock S. Ivkovi\'{c},
\newblock Hypercyclic operators on Hilbert $C^{\ast}$-modules,
\newblock Filomat 38:6, 1901-1913 (2024)


\bibitem{MT05}
\newblock V. M. Manuilov and E. V. Troitsky,
\newblock Hibert $C^{\ast}$-modules,
\newblock In: Translations of Mathematical Monographs.226,
\newblock J. Amer. Math. Soc., Providence, RI. ISSN 0894-0347, Providence, RI (2005)

\bibitem{R24}
\newblock S. Ri, H. Ju and J. Kim,
\newblock A note on disjoint hypercyclicity for invertible bilateral pseudo-shifts on $\ell^{p}(\mathbb{Z})$,
\newblock arXiv:2412.19115 (2024)

\bibitem{R69}
\newblock S. Rolewicz,
\newblock On orbits of elements,
\newblock Studia Math., \textbf{32}, 17-22 (1969)

\bibitem{S95}
\newblock H. Salas,
\newblock Hypercyclic weighted shifts,
\newblock Tran. Amer. Math. Soc., \textbf{347}, 993-1004 (1995)

\end{thebibliography}
\end{document}